
\documentclass[12pt]{article}
\usepackage{amsmath,amsthm,amssymb}
\usepackage{times}
\usepackage{enumerate}
\usepackage{hyperref}

\pagestyle{myheadings}
\markboth{B.~Kov\'acs, B.~Li and Ch.~Lubich}{
}


\theoremstyle{definition}
\newtheorem{theorem}{Theorem}[section]

\newtheorem{lemma}[theorem]{Lemma}

\newtheorem{remark}[theorem]{Remark}




\numberwithin{equation}{section}

\usepackage[margin=1.1in]{geometry}
%

\newcommand{\subjclass}[1]{\bigskip\noindent\emph{2010 Mathematics Subject Classification:}\enspace#1}
\newcommand{\keywords}[1]{\noindent\emph{Keywords:}\enspace#1}


\usepackage{graphicx,graphics,color}
\usepackage{tikz}

\newcommand\bfd{{\mathbf d}}
\newcommand\bfe{{\mathbf e}}
\newcommand\bff{{\mathbf f}}
\newcommand\bfg{{\mathbf g}}

\newcommand\bfn{{\mathbf n}}

\newcommand\bfu{{\mathbf u}}
\newcommand\bfv{{\mathbf v}}
\newcommand\bfw{{\mathbf w}}
\newcommand\bfx{{\mathbf x}}

\newcommand\bfz{{\mathbf z}}
\newcommand\bfA{{\mathbf A}}

\newcommand\bfH{{\mathbf H}}
\newcommand\bfF{{\mathbf F}}

\newcommand\bfK{{\mathbf K}}
\newcommand\bfM{{\mathbf M}}


\newcommand\qon{\quad\hbox{ on }\quad}

\renewcommand{\d}{\text{d}}

%


\newcommand{\Ga}{\Gamma}

\newcommand{\nbg}{\nabla_{\Gamma}}

\newcommand{\mat}{\partial^{\bullet}}

\newcommand{\diff}{\frac{\d}{\d t}}
\newcommand{\eps}{\varepsilon}

\newcommand{\inv}{^{-1}}

\newcommand{\nb}{\nabla}

\newcommand{\pa}{\partial}
\newcommand{\R}{\mathbb{R}}

\newcommand{\spn}{\textnormal{span}}

\def \u {u_0}

\def \t {(t)}

\def \to {\rightarrow}

\def \u {u}
\newcommand{\phiv}{\varphi^v}
\newcommand{\phin}{\varphi^\n}
\newcommand{\phiH}{\varphi^H}
\newcommand{\phiu}{\varphi^{\u}}






\newcommand{\normM}[1]{\| #1 \|_{\bfM(\xs)}}

\newcommand{\normA}[1]{\| #1 \|_{\bfA(\xs)}}

\newcommand{\normK}[1]{\| #1\|_{\bfK(\xs)}}



\newcommand{\xs}{\bfx^\ast}

\newcommand{\eu}{\bfe_\bfu}

\newcommand{\ex}{\bfe_\bfx}
\newcommand{\en}{\bfe_\bfn}
\newcommand{\eH}{\bfe_\bfH}
\newcommand{\ew}{\bfe_\bfw}





\newcommand{\n}{\nu}

\newcommand{\dof}{N}

\newcommand{\bcl}{\color{black}}
\newcommand{\ecl}{\color{black}}

\newcommand{\bbk}{\color{black}}
\newcommand{\ebk}{\color{black}}


\begin{document}




\title{\vspace{-25pt}
A convergent algorithm for forced mean curvature flow driven by diffusion on the surface}

\author{\normalsize Bal\'{a}zs Kov\'{a}cs \\
\normalsize Mathematisches Institut, Universit\"at T\"{u}bingen, \\
\normalsize Auf der Morgenstelle 10, 72076 T\"{u}bingen, Germany \\
\normalsize kovacs@na.uni-tuebingen.de \\
\\
\normalsize Buyang Li\\
\normalsize Department of Applied Mathematics,\\
\normalsize The Hong Kong Polytechnic University, Hong Kong \\
\normalsize buyang.li@polyu.edu.hk \\
\\
\normalsize Christian Lubich\\
\normalsize Mathematisches Institut, Universit\"at T\"{u}bingen, \\
\normalsize Auf der Morgenstelle 10, 72076 T\"{u}bingen, Germany \\
\normalsize lubich@na.uni-tuebingen.de\vspace{10pt}
}

\date{\normalsize \today}

\maketitle
\vspace{-10pt}


\begin{abstract}
	The evolution of a closed two-dimensional surface driven by both mean curvature flow and a reaction--diffusion process on the surface is formulated as a system that couples the velocity law not only to the surface partial differential equation but also to the evolution equations for the normal vector and the mean curvature on the surface. 
	Two algorithms are considered for the obtained system. Both methods combine surface finite elements for space discretization and linearly implicit backward difference formulae for time integration. Based on our recent results for mean curvature flow, one of the algorithms directly admits a convergence proof for its full discretization in the case of finite elements of polynomial degree at least two and backward difference formulae of orders two to five, with optimal-order error bounds.
	Numerical examples are provided to support and complement the theoretical convergence results (illustrating the convergence behaviour of both algorithms) and demonstrate the effectiveness of the methods in simulating a three-dimensional tumour growth model.

\subjclass{Primary 35R01; 65M60; 65M15; 65M12.}\bigskip

\keywords{forced mean curvature flow; reaction--diffusion on surfaces; evolving finite element method; linearly implicit; backward difference formula; convergence; tumour growth.}
\end{abstract}

\section{Introduction}

\bcl
We consider the numerical approximation of an unknown evolving two-dimensional closed surface $\Gamma(t)$ that is driven by both mean curvature flow and a reaction--diffusion process on the surface, starting from a given smooth initial surface $\Gamma^0$.
 The outer normal velocity $V$ of the surface is determined by the velocity law
\begin{equation}
\label{mcf-v1}
	V=-H+\u,
\end{equation}
where $H$ is the mean curvature of the evolving surface, and where  $\u(x,t)$ ($x\in \Gamma(t)$, $t\in[0,T]$) is the solution of a reaction--diffusion equation on the evolving surface,
\begin{equation}
\label{mcf-diffusion1}
\mat \u + \u \,\nb_{\Ga} \cdot v - \varDelta_{\Ga} \u = F(\u,\nb_{\Ga}\u) ,
\end{equation}
with given initial data $\u^0$. Here, $F: \R \times \R^{3} \to \R$  is a given smooth function, and $v$ is the surface velocity: $v=V\nu$ with $V$ of \eqref{mcf-v1} and the outer normal $\nu$.
Problem \eqref{mcf-v1}--\eqref{mcf-diffusion1} can be viewed as \emph{forced} mean curvature flow driven by the solution of the parabolic equation \eqref{mcf-diffusion1} on the evolving surface.  

While we study the numerical approximation of Problem \eqref{mcf-v1}--\eqref{mcf-diffusion1} with a scalar parabolic equation for notational simplicity, we remark that the numerical method and its convergence properties extend readily to the case of a {\it system} of reaction-diffusion equations 
\eqref{mcf-diffusion1} with solution $\u=(\u_1,\dots,\u_m)$ and the velocity law $V=-H+\alpha_1\u_1+\ldots+\alpha_m\u_m$ with constant real coefficients~$\alpha_i$. We will encounter such a more general problem in our numerical experiments with a tumour growth model.
\ecl

Many practical applications concern mean curvature flow coupled with surface partial differential equations (PDEs), for example tumour growth \cite{CrampinGaffneyMaini1999,CrampinGaffneyMaini2002,CGG,BarreiraElliottMadzvamuse2011,EylesKingStyles2019}; 
surface dissolution \cite{Erlebacheretal2001,EilksElliott2008} (also see \cite[Section~10.4]{DziukElliott_acta}); 
diffusion induced grain boundary motion \cite{FifeCahnElliott2001,DeckelnickElliottStyles2001,Styles2013}. 
 \bbk These models all use a velocity law that is linear in $u$, as in \eqref{mcf-v1} or as in the previous paragraph, except  for diffusion induced grain boundary motion where $V = -H+\u^2$.
\ecl

Numerical approximations to forced mean curvature flow coupled with surface partial differential equations have been considered in some of these papers. 
For \emph{curves}, convergence of numerical methods for such coupled problems of forced curve shortening flow was proved 
in~\cite{PozziStinner_curve,BDS}.  

Numerical approximation to pure mean curvature flow of {\it surfaces} --- i.e.~the case $\u \equiv 0$ in \eqref{mcf-v1} --- was first addressed by Dziuk \cite{Dziuk90},
based on a formulation of mean curvature flow as a formally heat-like equation on a surface. He proposed an evolving surface finite element method in which the moving nodes of the finite element mesh determine the approximate evolving surface. 
 Different surface finite element based methods were proposed by Barrett, Garcke \& N\"urnberg \cite{BGN2008} based on different variational formulations, and by Elliott \& Fritz \cite{Elliott-Fritz-2017} based on DeTurck's trick of reparametrizing the surface.
However, proving convergence of any of these methods has remained an open problem for the mean curvature flow of closed two-dimensional surfaces.

In \cite{MCF} we proved the first convergence result for semi- and full discretizations of mean curvature flow of closed surfaces with evolving surface finite elements. Discretizing the coupled system
for the velocity law together with evolution equations for the normal vector field and mean curvature, we obtained a method with provable error bounds of optimal order.


To our knowledge, no convergence results have yet been proved for forced mean curvature flow of closed surfaces \eqref{mcf-v1}--\eqref{mcf-diffusion1}. For a {\it regularized} version of forced mean curvature flow of closed surfaces, optimal-order convergence results for semi- and full discretizations were obtained in \cite{KLLP2017} and \cite{KL2018}, respectively.

In this paper, we extend the approach and techniques of our previous paper \cite{MCF} to the forced mean curvature flow problem \eqref{mcf-v1}--\eqref{mcf-diffusion1} as a coupled problem together with evolution equations for the normal vector and mean curvature. These evolution equations, as compared with  those  for pure mean curvature flow given in \cite{Huisken1984}, contain additional forcing terms depending on~$\u$. We present two fully discrete evolving finite element algorithms for the obtained coupled system. \bcl The first algorithm discretizes the two terms $\mat \u + \u \,\nb_{\Ga[X]} \cdot v$ separately in the spatial discretization by using the velocity law for $v$ and the approach in \cite{MCF}. 
The second algorithm combines the two terms in the spatial discretization by an idea of \cite{DziukElliott_ESFEM} for treating conservation laws on an evolving surface. \ecl
Both algorithms use evolving surface finite elements for spatial discretization and linearly implicit backward difference formulae for time integration, and 
for both algorithms the moving nodes of a finite element mesh determine the approximate evolving surface. 

\bcl The convergence proof for the forced mean curvature algorithm considered here is a very minor modification compared to the convergence proof for the pure mean curvature algorithm of \cite{MCF}, since that algorithm is already built on coupling evolution equations on the surface to the evolution of the surface. 
The first algorithm can be written in the same matrix--vector form as the method proposed in \cite{MCF} for the mean curvature flow. 
The convergence analysis in \cite{MCF} applies directly to the present algorithm for forced mean curvature flow as well, except for one term which corresponds to the term $\Delta_\Ga u$ in the evolution equation for $H$. The necessary changes to the stability analysis brought about by this term are carried out in detail. \ecl
Under the assumption that the problem admits a sufficiently regular solution, this yields uniform in time, optimal-order $H^1$-norm convergence results for the semi- and full discretizations of forced mean curvature flow when using at least quadratic evolving surface FEM and linearly implicit backward difference formulae of order two to five. 

\bcl 
For the  second algorithm, we indicate how such an optimal-order convergence estimate of the evolving surface finite element semi-discretization can be obtained by combining results of \cite{KLLP2017} and \cite{MCF}, but we do not carry out the details.

For the velocity law $V=-H+g(u)$ with a nonlinear smooth function $g$, we expect that convergence of a direct generalization of the algorithms presented in this paper can be shown with a combination of the techniques of \cite{MCF,KLLP2017,LubichMansour_wave}.
As this would become a nontrivial lengthy extension, it is not worked out here.
\ecl

Finally, we present numerical experiments to support and complement the theoretical results. We present convergence tests for both algorithms, and also present an experiment with the numerical simulation for a tumour growth model, using the parameters in \cite{BarreiraElliottMadzvamuse2011} for the sake of easy comparison.


\section{Evolution equations for mean curvature flow driven by diffusion on the surface}

\subsection{Basic notions and notation}
\label{subsection: basic notions}

We consider the evolving two-dimensional closed surface $\Gamma(t)\subset\R^3$ for times $t\in[0,T]$ as the image
$$
\Ga(t)=  \Ga[X(\cdot,t)] := \{ X(p,t) \,:\, p \in \Ga^0 \},
$$
of a smooth flow map $X:\Ga^0\times [0,T]\to \R^3$ such that $X(\cdot,t)$ is an embedding for every $t$. Here, $\Ga^0$ is a smooth closed initial surface, and $X(p,0)=p$. When the time $t$ is clear from the context, we drop $t$ in the notation and write for short
$$
\Ga[X] = \Ga[X(\cdot,t)].
$$
In view of the subsequent numerical discretization, it is convenient to think of $X(p,t)$ as the position at time $t$ of a moving particle with label $p$, and of $\Ga[X] $ as a collection of such particles. 

The {\it velocity} $v(x,t)\in\R^3$ at a point $x=X(p,t)\in\Ga(t)$  equals
\begin{equation} \label{velocity}
\partial_t X(p,t)= v(X(p,t),t).
\end{equation}
For a known velocity field  $v$,  the position $X(p,t)$ at time $t$ of the particle with label $p$ is obtained by solving the ordinary differential equation \eqref{velocity} from $0$ to $t$ for a fixed $p$.

For a function $w(x,t)$ ($x\in \Gamma(t)$, $0\le t \le T$) we denote the {\it material derivative} as
$$
\mat w(x,t) = \frac \d{\d t} \,w(X(p,t),t) \quad\hbox{ for } \ x=X(p,t).
$$

On any regular surface $\Gamma\subset\R^3$, we denote by $\nabla_{\Ga}w:\Gamma\to\R^3$ the  {\it tangential gradient} of a function $w:\Gamma\to\R$, and in the case of a vector field $f=(f_1,f_2,f_3)^T:\Gamma\to\R^3$, we let
$\nabla_{\Ga}f=
(\nabla_{\Ga}f_1,
\nabla_{\Ga}f_2,
\nabla_{\Ga}f_3)$. We thus use the convention that the gradient of $f$ has the gradient of the components as column vectors. 
We denote by $\nabla_{\Ga} \cdot f$ the {\it surface divergence} of a vector field $f$ on $\Gamma$, 
and by 
$\varDelta_{\Ga} w=\nabla_{\Ga}\cdot \nabla_{\Ga}w$ the {\it Laplace--Beltrami operator} applied to $w$; see the review \cite{DeckelnickDE2005} or \cite[Appendix~A]{Ecker2012} or any textbook on differential geometry for these notions. 

We denote the unit outer normal vector field to $\Gamma$ by $\n:\Gamma\to\R^3$. Its surface gradient contains the (extrinsic) curvature data of the surface $\Gamma$. At every $x\in\Gamma$, the matrix of the extended Weingarten map,
$$
A(x)=\nabla_\Gamma \n(x),
$$ 
is a symmetric $3\times 3$ matrix (see, e.g., \cite[Proposition~20]{Walker2015}). Apart from the eigenvalue $0$ with eigenvector $\n$, its other two eigenvalues are
the principal curvatures $\kappa_1$ and $\kappa_2$. They determine the fundamental quantities
\begin{align}\label{Def-H-A2}
H:={\rm tr}(A)=\kappa_1+\kappa_2, \qquad 
|A|^2 = \kappa_1^2 +\kappa_2^2 ,
\end{align}
where $|A|$ denotes the Frobenius norm of the matrix $A$.
Here, $H$ is called the {\it mean curvature} (as in most of the literature, we do not put a factor 1/2). 

\subsection{Evolution equations for normal vector and mean curvature} 
Forced mean curvature flow driven by diffusion on the surface 
sets the velocity \eqref{velocity} of the surface $\Ga[X]$ to
\begin{equation}
\label{mcf-v}
	v = V\n \quad\ \text{ with the normal velocity } \quad  V=-H+\u ,
\end{equation}
where  $\u$ is the solution of the non-linear reaction--diffusion equation on the surface $\Ga[X]$ with given initial value $\u^0$,
\begin{equation}
\label{mcf-diffusion}
\mat \u + \u \nb_{\Ga[X]} \cdot v - \varDelta_{\Ga[X]} \u = F(\u,\nb_{\Ga[X]}\u)  \qon \Ga[X],
\end{equation}
with a given smooth function $F: \R \times \R^{3} \to \R$.

The geometric quantities $H$ and $\n$ on the right-hand side of \eqref{mcf-v} satisfy the following evolution equations, which are modifications of the evolution equations for pure mean curvature flow (i.e.,~$V=-H$) as derived by Huisken \cite{Huisken1984}.
\begin{lemma}
	\label{lemma:evolution equations for geometric variables}
	For a regular surface $\Ga[X]$ moving under forced mean curvature flow \eqref{mcf-v},  the normal vector and the mean curvature satisfy
	\begin{align}
	\label{Eq_n} 
	\mat \n &= \varDelta_{\Ga[X]} \n + |A|^2 \,\n - \nb_{\Ga[X]} \u , \\
	\label{Eq_H}
	\mat H &= \varDelta_{\Ga[X]} H + |A|^2 H  - \varDelta_{\Ga[X]} \u - |A|^2 \u. 
	\end{align}
\end{lemma}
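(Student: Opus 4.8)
The plan is to deduce both identities from the known evolution equations for \emph{pure} mean curvature flow, using only that the velocity prescribed by \eqref{mcf-v} is purely normal, $v=V\n$, and that this scalar $V$ enters the evolution of $\n$ and $H$ \emph{linearly}. The first step is to record the two general kinematic identities valid for any closed surface $\Ga[X]$ evolving with a purely normal velocity $v=V\n$ (with $V$ an arbitrary smooth scalar function on the surface):
\begin{equation*}
\mat \n = -\,\nb_{\Ga[X]} V , \qquad
\mat H = -\,\varDelta_{\Ga[X]} V - |A|^2\, V .
\end{equation*}
The first follows at once by differentiating $|\n|^2=1$ (to see that $\mat\n$ is tangential) and $\n\cdot\partial_{p_i}X=0$ in a local parametrisation and using $\partial_t X = V\n$; the second is obtained by differentiating $H=\nb_{\Ga[X]}\cdot\n$, commuting the material derivative with the tangential derivatives, and invoking the Weingarten relations, which is exactly Huisken's computation \cite{Huisken1984} (see also \cite[Appendix~A]{Ecker2012}) and which I would either reproduce or quote. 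Specialising to $V=-H$ these recover Huisken's equations $\mat\n=\varDelta_{\Ga[X]}\n+|A|^2\n$ and $\mat H=\varDelta_{\Ga[X]}H+|A|^2H$, and comparing the two expressions for $\mat\n$ in that case yields the Simons-type identity
\begin{equation*}
\varDelta_{\Ga[X]}\n = \nb_{\Ga[X]} H - |A|^2\,\n ,
\end{equation*}
which I would also record (alternatively it follows directly from the Codazzi equations).

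The second step is pure substitution. Inserting the forced velocity law $V=-H+\u$ into the general identity for $\n$ gives $\mat\n = \nb_{\Ga[X]}H - \nb_{\Ga[X]}\u$, and replacing $\nb_{\Ga[X]}H$ by $\varDelta_{\Ga[X]}\n+|A|^2\n$ via the Simons-type identity yields \eqref{Eq_n}. Inserting $V=-H+\u$ into the general identity for $H$ gives $\mat H = \varDelta_{\Ga[X]}H + |A|^2H - \varDelta_{\Ga[X]}\u - |A|^2\u$, which is \eqref{Eq_H}. Thus the additional forcing terms $-\nb_{\Ga[X]}\u$ in \eqref{Eq_n} and $-\varDelta_{\Ga[X]}\u-|A|^2\u$ in \eqref{Eq_H} are precisely the contributions of the forcing part $\u$ of the normal velocity, produced by the same linear maps $V\mapsto -\nb_{\Ga[X]}V$ and $V\mapsto-\varDelta_{\Ga[X]}V-|A|^2V$ that govern the geometry under any normal flow.

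Once the two general identities are available the remainder is bookkeeping, so the only delicate part is their derivation, i.e.\ correctly interchanging the material derivative $\mat$ with $\nb_{\Ga[X]}$ and $\varDelta_{\Ga[X]}$ on the moving surface (the relevant commutators involve $V$, $\nb_{\Ga[X]}V$ and the second fundamental form $A$) and keeping all signs consistent with the conventions $A=\nb_{\Ga[X]}\n$, $H=\mathrm{tr}(A)$. I expect the evolution equation for $H$, and in particular the interchange of $\mat$ with $\varDelta_{\Ga[X]}$, to be the step needing the most care; since $v$ is purely normal there are no tangential-advection corrections (these would appear only for a tangentially reparametrised flow map, which is excluded here by \eqref{velocity} and \eqref{mcf-v}), so this reduces essentially to Huisken's original calculation with the scalar $-H$ replaced by $-H+\u$.
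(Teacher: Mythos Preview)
Your proposal is correct and follows essentially the same route as the paper: record the general normal-flow identities $\mat\n=-\nb_{\Ga[X]}V$ and $\mat H=-\varDelta_{\Ga[X]}V-|A|^2V$, invoke the static surface identity $\nb_{\Ga[X]}H=\varDelta_{\Ga[X]}\n+|A|^2\n$, and substitute $V=-H+\u$. The only cosmetic difference is that the paper cites the Simons-type identity directly as a known surface fact (from \cite{Ecker2012,Walker2015}) rather than recovering it by comparing the two forms of $\mat\n$ in the pure case; your alternative via Codazzi matches the paper's citation exactly.
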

\begin{proof}
	Using the normal velocity $V$ in the proof of \cite[Lemma~3.3]{Huisken1984}, or see also \cite[Lemma~2.37]{BGN_survey}, the following evolution equation for the normal vector holds: 
	$$
	\mat \n = - \nabla_{\Ga[X]} V . 
	$$
	On any surface $\Gamma$, it holds true that (see \cite[(A.9)]{Ecker2012} or \cite[Proposition~24]{Walker2015})
	$$
	\nabla_{\Ga[X]} H = \varDelta_{\Ga[X]} \n  +|A|^2 \n,
	$$
	which, in combination with $V=-H + \u$ from \eqref{mcf-v}, gives the stated evolution equation for~$\n$.
	
	By revising the proof of \cite[Theorem~3.4 and Corollary~3.5]{Huisken1984}, or see \cite[Lemma~2.39]{BGN_survey}, with the normal velocity $V$ we obtain 
	\begin{equation*}
	\mat H = -\varDelta_{\Ga[X]} V - |A|^2 V ,
	\end{equation*}
	which, again with $V=-H + \u$ from \eqref{mcf-v}, yields the evolution equation for~$H$.
\end{proof}

\subsection{The system of equations used for discretization}
Similarly to \cite{MCF}, collecting the above equations, we have reformulated forced mean curvature flow as the system of semi-linear parabolic equations \eqref{Eq_n}--\eqref{Eq_H} on the surface coupled to the velocity law \eqref{mcf-v} and the surface PDE \eqref{mcf-diffusion}.
The numerical discretization is based on a weak formulation of \eqref{mcf-v}--\eqref{Eq_H}, together with the velocity equation \eqref{velocity}. \bbk For the velocity law \eqref{mcf-v} we use a weak formulation that turns into the standard Ritz projection when restricted to a subspace. The weak formulation reads, with $V=-H+\u$ and $A=\nabla_{\Ga[X]}\nu$, \ebk 
\begin{subequations}
	\label{weak form}
	\begin{align}
	\label{eq:velocity law - weak form}
	& \int_{\Ga[X]} \!\!\!\! \nabla_{\Ga[X]} v \cdot  \nabla_{\Ga[X]} \phiv + \! \int_{\Ga[X]} \!\!\!\!  v \cdot \phiv  
	=   \!\int_{\Ga[X]} \!\!\!\!\! \nabla_{\Ga[X]}(V\n) \cdot \nabla_{\Ga[X]}\phiv + \int_{\Ga[X]} \!\!\!\!\! V\n \cdot \phiv 
	\\
	\label{eq:PDE for normal - weak form}
	& \int_{\Ga[X]} \!\!\!\! \mat \n \cdot \phin + \int_{\Ga[X]} \!\!\!\! \nabla_{\Ga[X]} \n \cdot \nabla_{\Ga[X]} \phin 
	=  \int_{\Ga[X]} \!\!\!\! |A|^2\,   \n\, \cdot \phin
	- \int_{\Ga[X]} \!\!\!\! \nabla_{\Ga[X]}\u \cdot \phin \\
	\label{eq:PDE for mean curvature - weak form}
	& \int_{\Ga[X]} \!\!\!\!\! \mat H \, \phiH \! + \! \int_{\Ga[X]} \!\!\!\!\! \nabla_{\Ga[X]} H \cdot  \nabla_{\Ga[X]} \phiH 
	= \! -\int_{\Ga[X]} \!\!\!\!\! | A|^2 \, V\, \phiH 
	\nonumber \\
	&\ \phantom{\int_{\Ga[X]} \!\!\!\!\! \mat H \, \phiH \! + \! \int_{\Ga[X]} \!\!\!\!\! \nabla_{\Ga[X]} H \cdot  \nabla_{\Ga[X]} \phiH = } + \! \int_{\Ga[X]} \!\!\!\!\! \nabla_{\Ga[X]}\u \cdot \nabla_{\Ga[X]} \phiH ,
	\end{align}
\end{subequations}
\begin{equation}
\label{eq:surface PDE - weak form}
	\diff \int_{\Ga[X]} \u \phiu + \int_{\Ga[X]} \nabla_{\Ga[X]} \u \cdot \nabla_{\Ga[X]} \phiu =  \int_{\Ga[X]} F(\u,\nb_{\Ga[X]}\u) \phiu ,
\end{equation}
for all test functions $\phiv \in H^1(\Ga[X])^3$ and $\phin \in H^1(\Ga[X])^3$, $\phiH \in H^1(\Ga[X])$, and $\phiu \in H^1(\Ga[X])$ with $\mat \phiu = 0$. Here,
we use the Sobolev space 
$H^1(\Ga)=\{ u \in L^2(\Gamma)\,:\, \nabla_\Gamma u \in L^2(\Gamma) \}$. Throughout the paper both the usual Euclidean scalar product for vectors and the Frobenius inner product for matrices (which equals to the Euclidean product using an arbitrary vectorization) are denoted by a dot. 
This system is complemented with the initial data $X^0$, $\n^0$, $H^0$ and~$\u^0$.

An alternative weak formulation of \eqref{eq:surface PDE - weak form}, which is similar to \eqref{eq:PDE for normal - weak form}--\eqref{eq:PDE for mean curvature - weak form},  is based on
\begin{align*}
&\ \int_{\Ga[X]} \mat \u \,\phiu + \int_{\Ga[X]} \nabla_{\Ga[X]} \u \cdot \nabla_{\Ga[X]} \phiu 
=   \int_{\Ga[X]} F(\u,\nb_{\Ga[X]}\u) \phiu - \int_{\Ga[X]} \big( \nb_{\Ga[X]}\cdot v\big) \u  \phiu ,
\end{align*}
for $\phiu \in H^1(\Ga[X])$. \bbk 
Using that $\nbg V \cdot \nu = 0$ and $H = \nb_{\Ga[X]} \cdot \n$ and inserting the velocity law \eqref{mcf-v}, we obtain 
\begin{align*}
\nb_{\Ga[X]} \cdot v = &\ \nb_{\Ga[X]} \cdot (V \n) 
=   (\nb_{\Ga[X]} V) \cdot \n + V (\nb_{\Ga[X]} \cdot \n) 
=  
  V (\nb_{\Ga[X]} \cdot \n) \\
= &\ \bbk  (-H + \u) H .
\end{align*}
This yields a weak formulation of a similar form as \eqref{eq:PDE for normal - weak form} and \eqref{eq:PDE for mean curvature - weak form},
\begin{equation}
\label{eq:surface PDE - alternative weak form}
\begin{aligned}
&\ \int_{\Ga[X]} \mat \u\, \phiu + \int_{\Ga[X]} \nabla_{\Ga[X]} \u \cdot \nabla_{\Ga[X]} \phiu 
=  \int_{\Ga[X]} f(H,\u,\nb_{\Ga[X]}\u) \phiu 
\end{aligned}
\end{equation}
for all $\phiu \in H^1(\Ga[X])$, where we set
$$
f(H,\u,\nb_{\Ga[X]}\u) = F(\u,\nb_{\Ga[X]}\u) -  (-H + \u) Hu.
$$
\ebk

%

\section{Evolving finite element semi-discretization}
\label{section:ESFEM}

\subsection{Evolving surface finite elements}
We formulate the evolving surface finite element (ESFEM) discretization for the velocity law coupled with evolution equations on the evolving surface, following the description in \cite{KLLP2017,MCF}, which is based on \cite{Dziuk88} and \cite{Demlow2009}. \bbk We use a surface approximation consisting of curved elements of polynomial degree $k$ over a flat triangular reference element, which are therefore simply called triangles (even if they are curved), and use \ebk continuous piecewise polynomial basis functions of degree~$k$, as defined in \cite[Section 2.5]{Demlow2009}.

We triangulate the given smooth initial surface $\Gamma^0$ by an admissible family of triangulations $\mathcal{T}_h$ of decreasing maximal element diameter $h$; see \cite{DziukElliott_ESFEM} for the notion of an admissible triangulation, which includes quasi-uniformity and shape regularity. For a momentarily fixed $h$, we denote by $\bfx^0 $  the vector in $\R^{3\dof}$ that collects all nodes $p_j$ $(j=1,\dots,\dof)$ of the initial triangulation. By piecewise polynomial interpolation of degree $k$, the nodal vector defines an approximate surface $\Gamma_h^0$ that interpolates $\Gamma^0$ in the nodes $p_j$ \bbk of the (curved) triangles of $\mathcal{T}_h$. \ebk We will evolve the $j$th node in time \bbk according to an approximation of the ODE \eqref{velocity}, \ebk denoted $x_j(t)$ with $x_j(0)=p_j$, and collect the nodes at time $t$ in a column vector
$$
\bfx(t) \in \R^{3\dof}. 
$$
We just write $\bfx$ for $\bfx(t)$ when the dependence on $t$ is not important.

By piecewise polynomial interpolation on the  plane reference triangle that corresponds to every
curved triangle of the triangulation, the nodal vector $\bfx$ defines a closed surface denoted by $\Gamma_h[\bfx]$. We can then define globally continuous finite element {\it basis functions}
$$
\phi_i[\bfx]:\Gamma_h[\bfx]\to\R, \qquad i=1,\dotsc,\dof,
$$
which have the property that on every triangle their pullback to the reference triangle is polynomial of degree $k$, and which satisfy at the nodes $\phi_i[\bfx](x_j) = \delta_{ij}$ for all $i,j = 1,  \dotsc, \dof .$
These functions span the finite element space on $\Gamma_h[\bfx]$,
\begin{equation*}
S_h[\bfx] = S_h(\Gamma_h[\bfx])=\spn\big\{ \phi_1[\bfx], \phi_2[\bfx], \dotsc, \phi_\dof[\bfx] \big\} .
\end{equation*}
For a finite element function $u_h\in S_h[\bfx]$, the tangential gradient $\nabla_{\Gamma_h[\bfx]}u_h$ is defined piecewise on each element.

The discrete surface at time $t$ is parametrized by the initial discrete surface via the map $X_h(\cdot,t):\Gamma_h^0\to\Gamma_h[\bfx(t)]$ defined by
$$
X_h(p_h,t) = \sum_{j=1}^\dof x_j(t) \, \phi_j[\bfx(0)](p_h), \qquad p_h \in \Gamma_h^0,
$$
which has the properties that $X_h(p_j,t)=x_j(t)$ for $j=1,\dots,\dof$, that  $X_h(p_h,0) = p_h$ for all $p_h\in\Gamma_h^0$, and
$$
\Gamma_h[\bfx(t)]=\Gamma[X_h(\cdot,t)] = \{ X_h(p_h,t) \,:\, p_h \in \Ga_h^0 \}. 
$$

The {\it discrete velocity} $v_h(x,t)\in\R^3$ at a point $x=X_h(p_h,t) \in \Gamma[X_h(\cdot,t)]$ is given by
$$
\partial_t X_h(p_h,t) = v_h(X_h(p_h,t),t).
$$
In view of the transport property of the basis functions  \cite{DziukElliott_ESFEM},
$
\tfrac\d{\d t} \big( \phi_j[\bfx(t)](X_h(p_h,t)) \big) =0 ,
$
the discrete velocity equals, for $x \in \Gamma_h[\bfx(t)]$,
$$
v_h(x,t) = \sum_{j=1}^\dof v_j(t) \, \phi_j[\bfx(t)](x) \qquad \hbox{with } \ v_j(t)=\dot x_j(t),
$$
where the dot denotes the time derivative $\d/\d t$. 
Hence, the discrete velocity $v_h(\cdot,t)$ is in the finite element space $S_h[\bfx(t)]$, with nodal vector $\bfv(t)=\dot\bfx(t)$.
%
%

The {\it discrete material derivative} of a finite element function $u_h(x,t)$ with nodal values $u_j(t)$ is
$$
\mat_h u_h(x,t) = \frac{\d}{\d t} u_h(X_h(p_h,t)) = \sum_{j=1}^\dof \dot u_j(t)  \phi_j[\bfx(t)](x)  \quad\text{at}\quad x=X_h(p_h,t).
$$


\subsection{ESFEM spatial semi-discretizations}
\label{subsection:semi-discretization}

Now we will describe the semi-discretization of the coupled system using both formulations of the surface PDE.

The finite element spatial semi-discretization of the weak coupled parabolic system \eqref{weak form} and \eqref{eq:surface PDE - alternative weak form} reads as follows: Find the unknown nodal vector $\bfx(t)\in \R^{3\dof}$ and the unknown finite element functions $v_h(\cdot,t)\in S_h[\bfx(t)]^3$ and $\n_h(\cdot,t)\in S_h[\bfx(t)]^3$, $H_h(\cdot,t)\in S_h[\bfx(t)]$, and $\u_h(\cdot,t)\in S_h[\bfx(t)]$ such that, by denoting $\alpha_h^2 = | \nb_{\Ga_h[\bfx]} \n_h |^2$ and $V_h = -H_h + \u_h$, 
\begin{subequations}
	\label{eq:semidiscrete weak form}
	\begin{align}
		\label{eq:semidiscrete v}
		& \int_{\Ga_h[\bfx]} \!\!\!\!\! \nb_{\Ga_h[\bfx]} v_h \cdot \nb_{\Ga_h[\bfx]} \phiv_h +\int_{\Ga_h[\bfx]} \!\!\!\!\! v_h \cdot \phiv_h 
		=  \int_{\Ga_h[\bfx]} \!\!\!\!\! \nabla_{\Ga_h[\bfx]}(V_h\n_h) \cdot \nabla_{\Ga_h[\bfx]}\phiv_h + \int_{\Ga_h[\bfx]} \!\!\!\! V_h \n_h \cdot \phiv_h 
		\\ 
		\label{eq:semidiscrete nu}
		&\int_{\Ga_h[\bfx]} \!\!\!\!\! \mat_h \n_h \cdot \phin_h +\int_{\Ga_h[\bfx]}\!\!\!\! \nb_{\Ga_h[\bfx]} \n_h \cdot \nb_{\Ga_h[\bfx]} \phin_h 
		= \int_{\Ga_h[\bfx]} \!\!\!\!\! \alpha_h^2  \, \n_h \cdot \phin_h 
		- \int_{\Ga_h[\bfx]} \!\!\!\!\! \nb_{\Ga_h[\bfx]} \u_h \cdot \phin_h\\
		\label{eq:semidiscrete H}
		&\int_{\Ga_h[\bfx]} \!\!\!\!\! \mat_h H_h \, \phiH_h + \int_{\Ga_h[\bfx]} \!\!\!\!\!  \nb_{\Ga_h[\bfx]} H_h \cdot \nb_{\Ga_h[\bfx]} \phiH_h  
		= -\int_{\Ga_h[\bfx]}\!\!\!\! \alpha_h^2 \, V_h \, \phiH_h 
		\nonumber\\
		&\hskip 7.5cm + \int_{\Ga_h[\bfx]} \!\!\!\!\! \nb_{\Ga_h[\bfx]} \u_h \cdot \nb_{\Ga_h[\bfx]} \phiH_h		
	\end{align}
\end{subequations}
and
\begin{equation}
\label{eq:semidiscrete surface PDE - alternative}
\begin{aligned}
&\ \int_{\Ga_h[\bfx]} \!\!\!\! \mat_h \u_h \phiu_h + \int_{\Ga_h[\bfx]} \!\!\!\!\! \nabla_{\Ga[X]} \u_h \cdot \nabla_{\Ga_h[\bfx]} \phiu_h 
= 
\int_{\Ga_h[\bfx]} \!\!\!\!  f(H_h,\u_h,\!\nb_{\Ga_h[\bfx]}\u_h) \phiu_h ,
\end{aligned}
\end{equation}
for all $\phiv_h\in S_h[\bfx(t)]^3$, $\phin_h\in S_h[\bfx(t)]^3$, $\phiH_h\in S_h[\bfx(t)]$, and   $\phiu_h\in S_h[\bfx(t)]$  with the surface $\Gamma_h[\bfx(t)]=\Gamma[X_h(\cdot,t)] $ given by the differential equation
\begin{equation}\label{xh}
\partial_t X_h(p_h,t) = v_h(X_h(p_h,t),t), \qquad p_h\in\Ga_h^0.
\end{equation}
The initial values for the nodal vector $\bfx$ are taken as the positions of the nodes of the triangulation of the given initial surface $\Gamma^0$.
The initial data for $\n_h$, $H_h$ and $\u_h$ are determined by Lagrange interpolation of $\n^0$, $H^0$ and $\u^0$, respectively. 

Alternatively, the finite element spatial semi-discretization of the weak coupled parabolic system \eqref{weak form} and \eqref{eq:surface PDE - weak form} determines the same unknown functions, but, instead of \eqref{eq:semidiscrete surface PDE - alternative}, the equations \eqref{eq:semidiscrete weak form} and the ODE \eqref{xh} are coupled to
\begin{equation}
\label{eq:semidiscrete surface PDE}
	\diff \int_{\Ga_h[\bfx]}\!\!\!\! \u_h \phiu_h + \int_{\Ga_h[\bfx]}\!\! \nabla_{\Ga_h[\bfx]} \u_h \cdot \nabla_{\Ga_h[\bfx]} \phiu_h =  \int_{\Ga_h[\bfx]}\!\! F(\u_h,\nb_{\Ga_h[\bfx]}\u_h) \phiu_h
\end{equation}
for all $\phiu_h\in S_h[\bfx(t)]$ \bbk with $\mat_h \phiu_h = 0$. \ebk

In the above approaches, the discretization of the evolution equations for $\n$, $H$ and $\u$ is done in the usual way of evolving surface finite elements. The velocity law \eqref{mcf-v} is enforced by a Ritz projection to the finite element space on $\Gamma_h[\bfx]$. 
Note that the finite element functions $\n_h$ and $H_h$ are \emph{not} the normal vector and the mean curvature of the discrete surface $\Gamma_h[\bfx(t)]$.

\subsection{Matrix--vector formulation}
\label{subsection:DAE}

We collect the nodal values in column vectors  $\bfv=(v_j) \in \R^{3N}$, $\bfn=(\n_j) \in \R^{3N}$, $\bfH=(H_j)\in\R^N$ and $\bfw=(\u_j) \in\R^N$.
We define the surface-dependent mass matrix $\bfM(\bfx)$ and stiffness matrix $\bfA(\bfx)$ 
on the surface determined by the nodal vector $\bfx$:
\begin{equation*}
\begin{aligned}
\bfM(\bfx)\vert_{ij} =&\ \int_{\Ga_h[\bfx]} \! \phi_i[\bfx] \phi_j[\bfx] , \\
\bfA(\bfx)\vert_{ij} =&\ \int_{\Ga_h[\bfx]} \! \nb_{\Ga_h[\bfx]} \phi_i[\bfx] \cdot \nb_{\Ga_h[\bfx]} \phi_j[\bfx] , 
\end{aligned}
\qquad i,j = 1,  \dotsc,\dof ,
\end{equation*}
with the finite element nodal basis functions $\phi_j[\bfx] \in S_h[\bfx]$.
\bcl
We further let, for an arbitrary dimension $d$ (with the identity matrices $I_d \in \R^{d \times d}$),
$$
\bfM^{[d]}(\bfx)= I_d \otimes \bfM(\bfx), \qquad
\bfA^{[d]}(\bfx)= I_d \otimes \bfA(\bfx), \qquad
\bfK^{[d]}(\bfx) = I_d \otimes \bigl( \bfM(\bfx) + \bfA(\bfx) \bigr).
$$
When no confusion can arise, we write $\bfM(\bfx)$ for $\bfM^{[d]}(\bfx)$, $\bfA(\bfx)$ for $\bfA^{[d]}(\bfx)$, and $\bfK(\bfx)$ for $\bfK^{[d]}(\bfx)$.

We define nonlinear functions $\bff(\bfx,\bfn,\bfH,\bfu)\in\R^{5N}$ and $\bfg(\bfx,\bfn,\bfH,\bfu)\in\R^{3N}$, where
$$
\bff(\bfx,\bfn,\bfH,\bfu)
=
\left(\begin{array}{c}
\bff_\nu(\bfx,\bfn,\bfH,\bfu)  \\ 
\bff_H(\bfx,\bfn,\bfH,\bfu)  \\
\bff_u(\bfx,\bfH,\bfu)  
\end{array}\right)
$$
with $\bff_\nu(\bfx,\bfn,\bfH,\bfu)\in \R^{3N}$, $\bff_H(\bfx,\bfn,\bfH,\bfu)\in \R^{N}$ and $\bff_u(\bfx,\bfn,\bfH,\bfu)\in \R^{N}$.
These functions are
given as follows, with the notations $\alpha_h^2 = | \nb_{\Ga_h[\bfx]} \n_h |^2$ and $V_h = -H_h + \u_h$,
\begin{equation*}
\begin{aligned}
\bff_\nu(\bfx,\bfn,\bfH,\bfu)\vert_{j+(\ell-1)N} = &\ \int_{\Ga_h[\bfx]} \!\!\!\! \alpha_h^2 \,(\n_h)_\ell \,  \, \phi_j[\bfx] - \int_{\Ga_h[\bfx]}\!\! \Big(\nb_{\Ga_h[\bfx]} \u_h \Big)_\ell \cdot \phi_j[\bfx], 
\\
\bff_H(\bfx,\bfn,\bfH,\bfu)\vert_j = &\ -\int_{\Ga_h[\bfx]}\!\!\!\! \alpha_h^2 \, V_h \, \phi_j[\bfx] 
+ \int_{\Ga_h[\bfx]}\!\!\!\! \nb_{\Ga_h[\bfx]} \u_h \cdot \nb_{\Ga_h[\bfx]} \phi_j[\bfx] ,
\\
\bff_u(\bfx,\bfH,\bfu)\vert_j = &\ \int_{\Ga_h[\bfx]}\!\!\!\! f(H_h,\u_h,\nb_{\Ga_h[\bfx]}\u_h) \, \phi_j[\bfx] ;
\\
\bfg(\bfx,\bfn,\bfH,\bfu)\vert_{j+(\ell-1)N} = &\ \int_{\Ga_h[\bfx]} \!\!\!\! V_h (\n_h  )_\ell \, \phi_j[\bfx] +
 \int_{\Ga_h[\bfx]} \!\!\!\! \nb_{\Ga_h[\bfx]} (V_h (\n_h  )_\ell)\cdot \nb_{\Ga_h[\bfx]} \phi_j[\bfx] ,
\end{aligned}
\end{equation*}
for $j = 1, \dotsc, N$ and $\ell=1,2,3$. 
We abbreviate
$$
\bfw=\left(\begin{array}{c}
\bfn\\ 
\bfH\\
\bfu
\end{array}\right) \in \R^{5N} .
$$
Equations \eqref{eq:semidiscrete weak form} and \eqref{eq:semidiscrete surface PDE - alternative} with \eqref{xh}
can then be written in the matrix--vector formulation
\begin{equation}
\label{eq:matrix--vector form 2}
\begin{aligned}
\bfK^{[3]}(\bfx)\bfv = &\ \bfg(\bfx,\bfw) , \\
\bfM^{[5]}(\bfx)\dot\bfw + \bfA^{[5]}(\bfx)\bfw = &\ \bff(\bfx,\bfw) ,\\
\dot\bfx = &\ \bfv .
\end{aligned}
\end{equation}
The system \eqref{eq:matrix--vector form 2} for \emph{forced} mean curvature flow is formally the same as the matrix--vector form of the coupled system for \emph{non-forced} mean curvature flow derived in \cite{MCF}, cf.~(3.4)--(3.5) therein, with 
$\bfw=(\bfn; \bfH;\bfu)\in\R^{5N}$ in the role of $\bfu=(\bfn; \bfH)\in\R^{4N}$ of \cite{MCF}. The nonlinearity $\bff(\bfx,\bfw)$ is built up from integrals of the same type as $\bff(\bfx,\bfu)$ in  \cite {MCF}, with the only exception of the second term in $\bff_H$, whose entries contain the tangential gradient of the basis functions and which in total can be written as $\bfA(\bfx)\bfu$. 
This term stems from the term $-\Delta_{\Ga[X]}u$ in the evolution equation for $H$ in Lemma~\ref{lemma:evolution equations for geometric variables}.
The function $\bfg$ is defined in the same way as $\bfg$ in \cite{MCF}, just with $V_h=-H_h+u_h$ in place of $-H_h$.

\begin{remark}
For the alternative system of equations \eqref{eq:semidiscrete weak form} and \eqref{eq:semidiscrete surface PDE} with \eqref{xh} we denote
$$
\bfz=\left(\begin{array}{c}
\bfn\\ 
\bfH
\end{array}\right) \in \R^{4N} ,\qquad 
\bff(\bfx,\bfz,\bfu) = \left(\begin{array}{c}
\bff_\nu(\bfx,\bfn,\bfH,\bfu)  \\ 
\bff_H(\bfx,\bfn,\bfH,\bfu)
\end{array}\right) \in \R^{4N}
$$ 
and introduce
$$
\bfF(\bfx,\bfu)\vert_j =  \int_{\Ga_h[\bfx]}\!\!\!\! F(\u_h,\nb_{\Ga_h[\bfx]}\u_h) \, \phi_j[\bfx].
$$
Equations \eqref{eq:semidiscrete weak form} and \eqref{eq:semidiscrete surface PDE} with \eqref{xh} can then be written in the following matrix--vector form: 
\begin{equation}
\label{eq:matrix--vector form 1}
\begin{aligned}
\bfK^{[3]}(\bfx)\bfv = &\ \bfg(\bfx,\bfz,\bfu) , 
\\
\bfM^{[4]}(\bfx)\dot\bfz+\bfA^{[4]}(\bfx)\bfz = &\ \bff(\bfx,\bfz,\bfu) ,  
\\
\diff \Big( \bfM(\bfx)\bfu \Big) + \bfA(\bfx)\bfu = &\ \bfF(\bfx,\bfu) ,\\
\dot\bfx = &\ \bfv.
\end{aligned}
\end{equation}
\end{remark}
\ecl

\subsection{Lifts} \label{subsec:lifts}

As in \cite{KLLP2017} and \cite[Section~3.4]{MCF}, we compare functions on the {\it exact surface} $\Gamma[X(\cdot,t)]$ with functions on the {\it discrete surface} $\Gamma_h[\bfx(t)]$, via functions on the {\it interpolated surface} $\Gamma_h[\xs(t)]$, where
$\xs(t)$ denotes the nodal vector collecting the grid points $x_j^*(t)=X(p_j,t)$ on the exact surface, where $p_j$ are the nodes of the discrete initial triangulation $\Ga_h^0$.

Any finite element function $w_h$ on the discrete surface, with nodal values $w_j$, is associated with a finite element function $\widehat w_h$ on the interpolated surface $\Gamma_h^*$ with the exact same nodal values. 
This can be further lifted to a function on the exact surface by using the \emph{lift operator} $l$, mapping a function on the interpolated surface $\Gamma_h^*$ to a function on the exact surface $\Gamma$, provided that they are sufficiently close, see \cite{Dziuk88,Demlow2009}. 

Then the composed lift $L$ maps finite element functions on the discrete surface $\Gamma_h[\bfx(t)]$ to functions on the exact surface $\Gamma[X(\cdot,t)]$ via the interpolated surface $\Gamma_h[\xs(t)]$. This is denoted by 
$$
w_h^L = (\widehat w_h)^l.
$$

\section{Convergence of the semi-discretization}
\label{section: main result}

We are now in the position to formulate the first main result of this paper, which yields optimal-order error bounds for the finite element semi-discretization (using finite elements of polynomial degree $k \geq 2$) \eqref{eq:semidiscrete weak form}, and \eqref{eq:semidiscrete surface PDE} or \eqref{eq:semidiscrete surface PDE - alternative}, with \eqref{xh} of the system for forced mean curvature equations \eqref{weak form}, and one of the weak formulations \eqref{eq:surface PDE - weak form} or \eqref{eq:surface PDE - alternative weak form} for the surface PDE, with the ODE \eqref{velocity} for the positions. 
We introduce the notation
$$
	x_h^L(x,t) =  X_h^L(p,t) \in \Gamma_h[\bfx(t)] \qquad\hbox{for}\quad x=X(p,t) \in \Gamma[X(\cdot,t)].
$$

\begin{theorem}
\label{MainTHM} 
	For the coupled forced mean curvature flow problem \eqref{weak form} and 
	\eqref{eq:surface PDE - alternative weak form} with a smooth function $F$, taken together with the velocity equation \eqref{velocity}, 	we consider the space discretization \eqref{eq:semidiscrete weak form}--\eqref{xh} 
	(or equivalently \eqref{eq:matrix--vector form 2} in matrix--vector form) 
with evolving surface finite elements of polynomial degree $k\ge 2$. 
	Suppose that the problem admits an exact solution $(X,v,\n,H,\u)$ that is sufficiently regular on the time interval $t\in[0,T]$, and that the flow map $X(\cdot,t)$ is non-degenerate so that $\Gamma(t)=\Gamma[X(\cdot,t)]$ is a regular surface on the time interval $t\in[0,T]$. 
	
	Then, there exists a constant $h_0 > 0$ such that for all mesh sizes $h \leq h_0$ the following error bounds for the lifts of the discrete position, velocity, normal vector and mean curvature hold over the exact surface $\Ga(t)$ for $0 \leq t \leq T$:
	\begin{align*}
	\|x_h^L(\cdot,t) - \mathrm{id}_{\Ga(t)}\|_{H^1(\Ga(t))^3} \leq &\ Ch^k, \\
	\|v_h^L(\cdot,t) - v(\cdot,t)\|_{H^1(\Ga(t))^3} \leq &\ C h^k, \\
	\|\n_h^L(\cdot,t) - \n(\cdot,t)\|_{H^1(\Ga(t))^3} \leq &\ C h^k, \\
	\|H_h^L(\cdot,t) - H(\cdot,t)\|_{H^1(\Ga(t))} \leq &\ C h^k, \\
	\|\u_h^L(\cdot,t) - \u(\cdot,t)\|_{H^1(\Ga(t))} \leq &\ C h^k, 
	\intertext{and also}
	\|X_h^l(\cdot,t) - X(\cdot,t)\|_{H^1(\Ga_0)^3} \leq &\ Ch^k ,
	\end{align*}
	where the constant $C$ is independent of $h$ and $t$, but depends on bounds of higher derivatives of the solution $(X,v,\n,H,\u)$ of the forced mean curvature flow and on the length $T$ of the time interval.
\end{theorem}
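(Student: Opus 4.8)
The plan is to follow the strategy of \cite{MCF} essentially line by line, exploiting the fact — emphasized in the text above — that the matrix--vector system \eqref{eq:matrix--vector form 2} for forced mean curvature flow has the same structural form as the system for pure mean curvature flow, with $\bfw=(\bfn;\bfH;\bfu)\in\R^{5N}$ replacing $\bfu=(\bfn;\bfH)\in\R^{4N}$, and with the only genuinely new ingredient being the term $\bfA(\bfx)\bfu$ appearing in $\bff_H$ (coming from $-\Delta_{\Ga[X]}u$ in the evolution equation for $H$). Accordingly, the first step is to set up the error equations: define the nodal error vectors $\ex=\bfx-\xs$, $\ev=\bfv-\vs$, $\ew=\bfw-\bfw^\ast$ (split into $\en,\eH,\eu$), where the starred quantities are the nodal interpolations on the exact surface, and derive defect equations by inserting the interpolated exact solution into the semi-discrete scheme. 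Geometric approximation estimates (interpolation error, the bounds on $\bfM(\bfx)-\bfM(\bfx^\ast)$, $\bfA(\bfx)-\bfA(\bfx^\ast)$ in terms of $\|\ex\|$, and the lift estimates from \S\ref{subsec:lifts}) show that all defects are $\bigo(h^k)$ in the appropriate norms, exactly as in \cite{MCF,KLLP2017}; the extra defect contributed by the $\bfA(\bfx)\bfu$ term is of the same type and size.

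The heart of the argument is the stability estimate. Following \cite{MCF}, one tests the error equations with $\dotex$, $\doteu$ (or $\partial^\tau$ analogues in the fully discrete case), and with the errors themselves, and uses the energy/Alikakos-type combination together with the key geometric consistency estimate controlling $\|\ex\|$ and $\|\nabla_{\Gamma_h}\ex\|$. The crucial point, as noted in the excerpt, is that the analysis of \cite{MCF} carries over verbatim except for handling the term $\bfA(\bfx)\bfu-\bfA(\bfx^\ast)\bfu^\ast$. This splits as $\bfA(\bfx)\eu + (\bfA(\bfx)-\bfA(\bfx^\ast))\bfu^\ast$; the second piece is $\bigo(h^k)$ plus a term controlled by $\|\ex\|_{\bfA(\xs)}$ via the boundedness of $\bfu^\ast$, while the first piece, when it enters the estimate for $\eH$, produces a term of the form $\|\eu\|_{\bfA(\xs)}\,\|\eH\|_{\bfA(\xs)}$, i.e.\ a stiffness-norm product of two errors. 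This is exactly where the new work lies: one must show this term can be absorbed. The mechanism is that the $u$-equation itself is a standard parabolic ESFEM equation (either \eqref{eq:semidiscrete surface PDE - alternative} or \eqref{eq:semidiscrete surface PDE}), whose stability analysis — following \cite{KLLP2017,DziukElliott_ESFEM} — yields control of $\int_0^t\|\nabla_{\Gamma_h}\eu\|^2$, and the $H$-equation yields control of $\int_0^t\|\nabla_{\Gamma_h}\eH\|^2$; a Cauchy--Schwarz in time then bounds the offending term by the product of the two parabolic energies, which closes the Gronwall loop provided $h$ is small enough. One also needs $W^{1,\infty}$-type smallness of $\eu$ and $\en$ (bootstrapped from the $H^1$ bound via inverse estimates and an induction on the time level) to keep the nonlinearities $\alpha_h^2$, $f(H_h,u_h,\nabla u_h)$ Lipschitz on the relevant ball, just as in \cite{MCF}.

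Having obtained the stability estimate $\|\ex\|+\|\ev\|+\|\ew\|\le C(\|\text{defects}\|+h^k)$ uniformly on $[0,T]$ (for the semi-discretization; the fully discrete case additionally uses the multiplier technique of Nevanlinna--Odeh for BDF of order $2$ to $5$, unchanged from \cite{MCF}), one combines it with the $\bigo(h^k)$ defect bounds and the interpolation/lift error estimates to pass to the lifted errors on $\Gamma(t)$, yielding the six stated $H^1$-bounds. The main obstacle, as indicated, is the single cross term $\|\eu\|_{\bfA(\xs)}\|\eH\|_{\bfA(\xs)}$: handling it requires carrying the parabolic energy of $\eu$ explicitly through the estimate rather than treating $u$ as a mere perturbation, which is the one place where the proof of \cite{MCF} genuinely must be augmented; everything else is a routine transcription.
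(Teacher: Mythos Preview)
Your overall strategy --- carrying over the stability proof of \cite{MCF} and isolating the single new term $\bfA(\bfx)\bfu$ in $\bff_H$ as the only place requiring new work --- is exactly right, and matches the paper. The gap is in your proposed mechanism for absorbing that term.

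In the \cite{MCF} stability proof the error equation for $\bfw$ is tested with the \emph{time derivative} $\dot\bfe_\bfw$, not with $\bfe_\bfw$. Hence the problematic contribution is
\[
\dot\bfe_\bfH^T\bigl(\bfA(\bfx)\bfu-\bfA(\xs)\bfu^*\bigr),
\]
and your splitting $\bfA(\bfx)\eu+(\bfA(\bfx)-\bfA(\xs))\bfu^*$ gives, for the first piece, at best $\|\dot\bfe_\bfH\|_{\bfA(\xs)}\|\eu\|_{\bfA(\xs)}$ by Cauchy--Schwarz --- a stiffness norm of the \emph{time derivative} of $\eH$, not of $\eH$ itself. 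The \cite{MCF} energy method controls $\int_0^t\|\dot\bfe_\bfH\|_{\bfM}^2$ and the pointwise $\|\eH\|_{\bfK}$, but \emph{not} $\|\dot\bfe_\bfH\|_{\bfA}$. So the cross term you write down, $\|\eu\|_{\bfA(\xs)}\|\eH\|_{\bfA(\xs)}$, is not the one that actually appears, and your Cauchy--Schwarz--in--time argument using the parabolic energies of $\eu$ and $\eH$ does not close. (The paper states this explicitly: the naive bound leaves $\|\dot\bfe_\bfw\|_{\bfK(\xs)}$ on the right-hand side, which ``is not sufficient for the further course of the proof.'')

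The paper's fix is different and worth knowing. Rather than bound $\bfA(\bfx)\bfu-\bfA(\xs)\bfu^*$ directly, it uses the $u$-equation itself to \emph{substitute}: from the scheme, $\bfA(\bfx)\bfu=-\bfM(\bfx)\dot\bfu+\bff_u(\bfx,\bfw)$, and similarly $\bfA(\xs)\bfu^*=-\bfM(\xs)\dot\bfu^*+\bff_u(\xs,\bfw^*)+\bfM(\xs)\bfd_\bfu$. Subtracting converts the stiffness term into mass-matrix terms, the leading one being $-\dot\bfe_\bfH^T\bfM(\bfx)\dot\bfe_\bfu$. This is then absorbed by introducing a \emph{weighted} $\bfM$-norm on $\dot\bfe_\bfw=(\dot\bfe_\bfn;\dot\bfe_\bfH;\dot\bfe_\bfu)$ with a large weight $\omega$ on the $\bfu$-component, so that $\|\dot\bfe_\bfH\|_{\bfM}\|\dot\bfe_\bfu\|_{\bfM}\le\rho\,\|\dot\bfe_\bfw\|_{\bfM}^2$ for $\rho$ as small as one likes. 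With this single local modification, the remainder of the \cite{MCF} stability and consistency proof goes through unchanged. (A minor side remark: the starred quantities in the paper are Ritz projections, not nodal interpolants; this matters for the defect bounds but not for the structural issue above.)
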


Sufficient regularity assumptions are the following: with bounds that are uniform in $t\in[0,T]$, we assume $X(\cdot,t) \in  H^{k+1}(\Ga^0)^3$
and for $w=(\nu,H,\u)$ we assume $\ w(\cdot,t), \mat w(\cdot,t) \in W^{k+1,\infty}(\Ga(t))^5$. 

\bcl \medskip
Under these strong regularity conditions on the solution, we only require local Lipschitz continuity of the function $F$ in \eqref{mcf-diffusion1}. This condition is, of course, not sufficient to ensure the existence of even just a weak solution. The point here is that we 
restrict our attention to cases where a sufficiently regular solution exists, which we can then approximate with optimal order under weak conditions on~$F$. The regularity theory of Problem~\eqref{mcf-v1}--\eqref{mcf-diffusion1} is, however, outside the scope of this paper.

\bcl
\medskip
The remarks made after the convergence result in \cite{MCF} apply also here. In particular, it is explained that the admissibility of the triangulation over the whole time interval $[0,T]$ is preserved for sufficiently fine grids, provided the exact surface is sufficiently regular. 
\ecl
%

\bcl 
\medskip
\begin{proof}
The proof 
reduces in essence to the proof of Theorem~4.1 in~\cite{MCF}, since the matrix--vector formulation \eqref{eq:matrix--vector form 2} is of precisely the same form as  the matrix--vector formulation of \cite{MCF}, formulas (3.4)--(3.5) therein, with the same mass and stiffness matrices and with nonlinear functions given as integrals over products of smooth pointwise nonlinearities and finite element basis functions (and with $\bfw$ in the role of $\bfu$ of \cite{MCF}). The proof of the stability bounds of \cite[Proposition~7.1]{MCF} uses energy estimates (testing with the time derivative of the error) on the equations of the matrix--vector formulation to bound errors in terms of defects in \eqref{eq:matrix--vector form 2} 
in the appropriate norms. These stability bounds apply immediately to \eqref{eq:matrix--vector form 2} with the same proof, except for one subtle point: Because of the term $-\Delta_{\Ga[X]} \u$ in the evolution equation for $H$ in Lemma~\ref{lemma:evolution equations for geometric variables}, which translates into the second term $\bfA(\bfx)\bfu$ in $\bff_H(\bfx,\bfw)$ in the matrix--vector formulation, the bound for the nonlinearity in part (v) of the proof of  Proposition~7.1 in  \cite{MCF} needs to be changed. This is a very local modification to the proof. No other part of the stability proof is affected.

To explain and resolve this local difficulty, we must assume that the reader has acquired some familiarity with Section~7 of \cite{MCF}. We use the same notation $\bfe_\bfw = \bfw-\bfw^*$ etc.~for the error vectors and note that
$\bfe_\bfw =(\en;\eH;\eu)$ now is in the role of $\bfe_\bfu=(\en;\eH)$ of \cite{MCF}. 
Because of the  extra term  $\bfA(\bfx)\bfu$ in $\bff_H(\bfx,\bfw)$,  the same argument as in part (v) of the proof of  Proposition~7.1 in  \cite{MCF} yields  only  a modified bound
$$
{\dot \bfe}_\bfw^T \big(\bff(\bfx,\bfw) - \bff(\xs,\bfw^*)\big) \le  
 c  \normK{{\dot \bfe}_\bfw} \Bigl( \normK{\bfe_\bfw} + \normA{\ex} \Bigr),
$$
whereas in \cite{MCF} only the weaker norm $\normM{{\dot \bfe}_\bfw}$ appears on the right-hand side. This modified estimate is not sufficient for the further course of the proof. 

It can be circumvented as follows. We write the error vector as $\ew=(\en;\eH;\eu)$ and take the inner product of ${\dot \bfe}_\bfH$ with 
$\big(\bff_H(\bfx,\bfw) - \bff_H(\xs,\bfw^*)\big)$. We note that
$$
\bff_H(\bfx,\bfw) = \widetilde \bff_H(\bfx,\bfw) +\bfA(\bfx)\bfu, 
$$
where $\widetilde \bff_H$ is a nonlinearity of the same type as those studied in \cite{MCF}, and so we have the following bound as in part (v) of the proof of  Proposition~7.1 in \cite{MCF},
$$
{\dot \bfe}_\bfH^T \big(\widetilde \bff_H(\bfx,\bfw) -  \widetilde \bff_H(\xs,\bfw^*)\big) \le c  \normM{{\dot \bfe}_\bfw} \Bigl( \normK{\bfe_\bfw} + \normA{\ex} \Bigr).
$$
For the solution $\bfx(t)$ of \eqref{eq:matrix--vector form 2} we have 
$$
\bfA(\bfx)\bfu = -\bfM(\bfx)\dot \bfu + \bff_u(\bfx,\bfw)
$$
and for the nodal vector $\bfu^*(t)$ of the Ritz projection of the exact solution $u(\cdot,t)$ and the nodal vector $\bfx^*(t)$ of the exact positions we have, with a defect $\bfd_\bfu(t)$,
$$
\bfA(\bfx^*)\bfu^* = -\bfM(\bfx^*)\dot \bfu^* + \bff_u(\bfx^*,\bfw^*) + \bfM(\bfx^*)\bfd_\bfu.
$$
So we can write
\begin{align*}
{\dot \bfe}_\bfH^T \bigl( \bfA(\bfx)\bfu  - \bfA(\bfx^*)\bfu^* \bigr) &= -{\dot \bfe}_\bfH^T \bfM(\bfx)\dot \bfe_\bfu -
{\dot \bfe}_\bfH^T \bigl(  \bfM(\bfx) -  \bfM(\bfx^*) \bigr) \bfu^* \\
&\quad +\bff_u(\bfx,\bfw)  - \bff_u(\bfx^*,\bfw^*) - {\dot \bfe}_\bfH^T \bfM(\bfx^*)\bfd_\bfu.
\end{align*}
By the same estimates as used repeatedly in the proof of  Proposition~7.1 in  \cite{MCF}, this yields
\begin{align*}
{\dot \bfe}_\bfH^T \bigl( \bfA(\bfx)\bfu  - \bfA(\bfx^*)\bfu^* \bigr) \le &\
 \|{\dot \bfe}_\bfH  \|_{\bfM(\bfx)}  \|{\dot \bfe}_\bfu  \|_{\bfM(\bfx)} + c  \|{\dot \bfe}_\bfH  \|_{\bfM(\bfx^*)} \| {\bfe}_\bfx  \|_{\bfA(\bfx^*)}  \\
 &\  + c  \normM{{\dot \bfe}_\bfH} \bigl( \normK{\bfe_\bfw} + \normA{\ex} \bigr)+
 \|{\dot \bfe}_\bfH  \|_{\bfM(\bfx^*)}  \|\bfd_\bfu \|_{\bfM(\bfx^*)}.
 \end{align*}
 We now fix a small $\rho>0$ and use the scaled norm, for ${\dot \bfe}_\bfw=({\dot \bfe}_\bfn;{\dot \bfe}_\bfH;{\dot \bfe}_\bfu)$,
 $$
 \|{\dot \bfe}_\bfw \|_{\bfM(\bfx)}^2 =  \|{\dot \bfe}_\bfn \|_{\bfM(\bfx)}^2 +  \|{\dot \bfe}_\bfH \|_{\bfM(\bfx)}^2 +  \omega^2 \|{\dot \bfe}_\bfu \|_{\bfM(\bfx)}^2 
 $$
 with a large weight $\omega$. If $\omega\ge 1/(2\rho)$, then we have
 $$
  \|{\dot \bfe}_\bfH  \|_{\bfM(\bfx)}  \|{\dot \bfe}_\bfu  \|_{\bfM(\bfx)}  \le \rho  \|{\dot \bfe}_\bfw  \|_{\bfM(\bfx)} ^2.
 $$
 Altogether, this yields the bound
 $$
{\dot \bfe}_\bfw^T \big(\bff(\bfx,\bfw) - \bff(\xs,\bfw^*)\big) \le \rho \|{\dot \bfe}_\bfw  \|_{\bfM(\bfx)} ^2 + c  \normM{{\dot \bfe}_\bfw} \Bigl( \normK{\bfe_\bfw} + \normA{\ex}  +   \normM{{ \bfd}_\bfu}\Bigr).
$$
With this bound, the further parts of the stability proof remain unchanged.

Since the additional terms in \eqref{weak form} and \eqref{eq:surface PDE - alternative weak form} to those in the evolution equations of pure mean curvature flow in \cite{MCF} do not present additional difficulties in the consistency error analysis,  the same bounds for the consistency errors in $(X,v,H,\n,u)$ are obtained as for $(X,v,H,\n)$ in \cite[Proposition~8.1]{MCF}. Furthermore, the combination of the stability bounds and the consistency error bounds to yield optimal-order $H^1$ error bounds is verbatim the same as in \cite[Section~9]{MCF}.
\end{proof}

\begin{remark}
\label{remark:semi-discrete convergence}
	For the semi-discretization \eqref{eq:matrix--vector form 1} a convergence proof can be obtained by combining the convergence proofs of our previous works \cite{KLLP2017} and \cite{MCF}. The stability of the scheme is obtained by combining the results of \cite[Proposition~6.1]{KLLP2017} (in particular part (A)) for the surface PDE, and of \cite[Proposition~7.1]{MCF} for the velocity law and for the geometric quantities, and further using the same modification for the extra term  $\bfA(\bfx)\bfu$ as in the proof above.
	As this extension does not require any new ideas beyond   \cite{KLLP2017} and \cite{MCF}, we do not present the lengthy but straightforward details. Since there are no additional difficulties in bounding the consistency errors, together with the stability bounds we then obtain the same error bounds as in Theorem~\ref{MainTHM}. This is in agreement with the results of numerical experiments presented in Section~\ref{section:numerics}.
\end{remark}
\ecl

\section{Linearly implicit full discretization}

For the time discretization of the system of ordinary differential equations of Section~\ref{subsection:DAE} we use a $q$-step linearly implicit backward difference formula (BDF) with $q \leq 5$. For a step size $\tau>0$, and with $t_n = n \tau \leq T$, let us introduce, for $n \geq q$,
\begin{alignat}{3}
\label{eq:BDF derivative def}
\text{the discrete time derivative} &\qquad& \dot\bfu^n = &\ \frac{1}{\tau} \sum_{j=0}^q \delta_j \bfu^{n-j} , 
\qquad \quad \text{and} \\
\label{eq:extrapolation def}
\text{the extrapolated value} &\qquad& \widetilde \bfu^n = &\ \sum_{j=0}^{q-1} \gamma_j \bfu^{n-1-j}  ,
\end{alignat}
where the coefficients are given by $\delta(\zeta)=\sum_{j=0}^q \delta_j \zeta^j=\sum_{\ell=1}^q \frac{1}{\ell}(1-\zeta)^\ell$ and $\gamma(\zeta) = \sum_{j=0}^{q-1} \gamma_j \zeta^j = (1 - (1-\zeta)^q)/\zeta$, respectively.

We determine the approximations $\bfx^n$ to $\bfx(t_n)$, $\bfv^n$ to $\bfv(t_n)$, and $\bfw^n$ to $\bfw(t_n)$ or $\bfz^n$ to $\bfz(t_n)$ and $\bfu^n$ to $\bfu(t_n)$ (only if not already collected into $\bfw^n$) by the linearly implicit BDF discretization of both systems \eqref{eq:matrix--vector form 2} and \eqref{eq:matrix--vector form 1}.

For \eqref{eq:matrix--vector form 2} we obtain
\begin{equation}
\label{BDF 2}
\begin{aligned}
\bfK(\widetilde \bfx^n) \bfv^n &= \bfg(\widetilde \bfx^n,\widetilde \bfw^n) , \\
\bfM(\widetilde \bfx^n) \dot\bfw^{n} + \bfA(\widetilde \bfx^n) \bfw^n &= \bff(\widetilde \bfx^n,\widetilde \bfw^n) ,  \\
\dot\bfx^{n} &= \bfv^n .
\end{aligned}
\end{equation}

For \eqref{eq:matrix--vector form 1} we obtain
\begin{equation}
\label{BDF 1}
\begin{aligned}
\bfK(\widetilde \bfx^n) \bfv^n &= \bfg(\widetilde \bfx^n,\widetilde \bfz^n,\widetilde \bfu^n) , \\
\bfM(\widetilde \bfx^n) \dot\bfz^{n} + \bfA(\widetilde \bfx^n) \bfz^n &= \bff(\widetilde \bfx^n,\widetilde \bfz^n,\widetilde \bfu^n) ,  \\
\frac{1}{\tau} \sum_{j=0}^q \delta_j \bfM(\widetilde \bfx^{n-j}) \bfu^{n-j} + \bfA(\widetilde \bfx^n) \bfu^n &= \bfF(\widetilde \bfx^n,\widetilde \bfu^n) ,  \\
\dot\bfx^{n} &= \bfv^n .
\end{aligned}
\end{equation}

The starting values $\bfx^i$ and $\bfw^i$, or, in case of \eqref{BDF 1}, $\bfz^i$ and $\bfu^i$, for $i=0,\dotsc,q-1$, are assumed to be given. They can be precomputed using either a lower order method with smaller step sizes or an implicit Runge--Kutta method.

The classical BDF method is known to be $A(\theta)$-stable for some $\theta>0$ for $q\leq6$ and to have order $q$; see \cite[Chapter~V]{HairerWannerII}.
This order is retained by the linearly implicit variant using the above coefficients $\gamma_j$; 
cf.~\cite{AkrivisLubich_quasilinBDF}.


From the vectors $\bfx^n =(x_j^n)$, $\bfv^n = (v_j^n)$, and $\bfw^n=(w_j^n)$ with $w_j^n=(\n_j^n,H_j^n,\u_j^n)\in\R^3 \times \R \times \R$ for the first method and 
$\bfz^n=(z_j^n)$ with $z_j^n=(\n_j^n,H_j^n)\in\R^3\times \R$ and $\bfu^n = (u_j^n)$ for the second method,  we obtain approximations to their respective variables as finite element functions whose nodal values are collected in these vectors.
%

\section{Convergence of the full discretization}

We are now in the position to formulate the second main result of this paper, which yields optimal-order error bounds for the combined ESFEM--BDF full discretizations \eqref{BDF 2} of the forced mean curvature flow problem \eqref{weak form} coupled to the weak form \eqref{eq:surface PDE - alternative weak form} of the surface PDE, with \eqref{velocity}, for finite elements of polynomial degree $k \geq 2$ and BDF methods of order $2 \leq q \leq 5$.

\begin{theorem}
\label{MainTHM-full} 
	Consider the ESFEM--BDF full discretizations \eqref{BDF 2} of the coupled forced mean curvature flow problem \eqref{weak form} and \eqref{eq:surface PDE - alternative weak form}, with \eqref{velocity}, using evolving surface finite elements of polynomial degree~$k \geq 2$ and linearly implicit BDF time discretization of order $q$ with $2 \leq q \leq 5$. 
	Suppose that the forced mean curvature flow problem admits an exact solution $(X,v,\n,H,\u)$ that is sufficiently smooth on the time interval $t\in[0,T]$, and that the flow map $X(\cdot,t):\Gamma^0\to \Gamma(t)\subset\R^3$ is non-degenerate so that $\Gamma(t)$ is a regular surface on the time interval $t\in[0,T]$. 
	Assume that the starting values are sufficiently accurate in the $H^1$ norm at time $t_i=i\tau$ for $i=0,\dots,q-1$.
	
	Then there exist  $h_0 > 0$ and $\tau_0 > 0$ such that for all mesh sizes $h \leq h_0$  and time step sizes $\tau \leq \tau_0$ satisfying the step size restriction 
	\begin{equation} \label{stepsize-restriction}
		\tau \leq C_0 h
	\end{equation}
	(where $C_0>0$ can be chosen arbitrarily),
	the following error bounds for the lifts of the discrete position, velocity, normal vector and mean curvature hold over the exact surface $\Ga(t_n)=\Ga[X(\cdot,t_n)]$ at time $t_n=n\tau\le T$:
	\begin{align*}
	\|(x_h^n)^L - \mathrm{id}_{\Gamma(t_n)}\|_{H^1(\Ga(t_n))^3} \leq &\ C(h^k+\tau^q), \\
	\|(v_h^n)^L - v(\cdot,t_n)\|_{H^1(\Ga(t_n))^3} \leq &\ C(h^k+\tau^q), \\ 
	\|(\n_h^n)^L - \n(\cdot,t_n)\|_{H^1(\Ga(t_n))^3} \leq &\ C(h^k+\tau^q), \\ 
	\|(H_h^n)^L - H(\cdot,t_n)\|_{H^1(\Ga(t_n))} \leq &\ C(h^k+\tau^q), \\
	\|(\u_h^n)^L - \u(\cdot,t_n)\|_{H^1(\Ga(t_n))} \leq &\ C(h^k+\tau^q), \\ 
	\intertext{and also}
	\|(X_h^n)^l - X(\cdot,t_n)\|_{H^1(\Ga_0)^3} \leq &\ C(h^k+\tau^q),
	\end{align*}
	where the constant $C$ is independent of $h$, $\tau$ and $n$ with $n\tau \leq T$, but depends on bounds of higher derivatives of the solution $(X,v,\n,H,\u)$ of the forced mean curvature flow problem, on the length $T$ of the time interval, and on $C_0$.
\end{theorem}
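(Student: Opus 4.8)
The plan is to reduce the full-discretization proof to the combination of two ingredients already available in the literature: the stability of the linearly implicit BDF time discretization applied to the matrix--vector system \eqref{eq:matrix--vector form 2}, which was established for the pure mean curvature flow in \cite{MCF}, and the consistency (defect) bounds for the exact solution. Since \eqref{eq:matrix--vector form 2} has formally the same structure as the mean curvature flow system of \cite{MCF} --- the same surface-dependent mass and stiffness matrices, the same ODE $\dot\bfx=\bfv$, the velocity law with $\bfK$, and a nonlinearity $\bff$ built from integrals of smooth pointwise nonlinearities against basis functions --- the entire BDF stability machinery of \cite{MCF} carries over, with $\bfw=(\bfn;\bfH;\bfu)\in\R^{5N}$ in the role of $\bfu=(\bfn;\bfH)\in\R^{4N}$ there. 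Concretely, I would first state the error equations obtained by inserting the Ritz projections $\bfx^*,\bfw^*$ into \eqref{BDF 2}, producing defects $\bfd_\bfx^n$, $\bfd_\bfw^n$, $\bfd_\bfv^n$; then invoke the BDF energy estimate (the multiplier technique of Nevanlinna--Odeh, as used in \cite{MCF}) to test the error recursion with the discrete time derivative of the error and obtain a discrete Gronwall inequality.

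The one genuinely new term --- and the point I expect to need care --- is exactly the one already isolated in the semi-discrete proof above: the extra summand $\bfA(\widetilde\bfx^n)\bfu^n$ in $\bff_H$, coming from the $-\Delta_{\Ga[X]}u$ term in the evolution equation for $H$. In the semi-discrete argument this produced an estimate involving the stronger norm $\normK{\dotev_\bfw}$ on the right-hand side, which was circumvented by writing $\bfA(\bfx)\bfu = -\bfM(\bfx)\dot\bfu + \bff_u(\bfx,\bfw)$ from the third block of \eqref{eq:matrix--vector form 2} and absorbing the $\bfM(\bfx)\dot\bfu$ contribution into the left-hand side using a weighted $\bfM$-norm on $\dotev_\bfw$ with a large weight $\omega$ on the $u$-component. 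The fully discrete analogue is to use instead the third equation of \eqref{BDF 2}, $\bfM(\widetilde\bfx^n)\dot\bfu^n = -\bfA(\widetilde\bfx^n)\bfu^n + \bff_u(\widetilde\bfx^n,\widetilde\bfw^n)$, i.e. to substitute $\bfA(\widetilde\bfx^n)\bfu^n = -\bfM(\widetilde\bfx^n)\dot\bfu^n + \bff_u(\widetilde\bfx^n,\widetilde\bfw^n)$ (and similarly for the starred quantities, with a defect $\bfd_\bfu^n$), so that the offending term becomes $-\dotev_\bfH^n{}^T\bfM(\widetilde\bfx^n)\dot\bfe_\bfu^n$ plus terms of the standard type plus a defect term $\normM{\bfd_\bfu^n}$. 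The cross term $-\dotev_\bfH^n{}^T\bfM(\widetilde\bfx^n)\dot\bfe_\bfu^n$ is then bounded, exactly as in the semi-discrete case, by $\rho\|\dotev_\bfw^n\|_{\bfM(\widetilde\bfx^n)}^2$ in the weighted norm with $\omega\ge 1/(2\rho)$, and $\rho$ is chosen small enough to be absorbed into the positive left-hand side of the BDF energy estimate. The one additional subtlety in the discrete setting is that $\dot\bfu^n$ is a BDF difference quotient, so one must check that the substitution is compatible with the BDF multiplier test functions; but since $\dot\bfu^n$ and $\dotev_\bfH^n$ are both linear combinations of the same $q+1$ time levels, this introduces nothing beyond the bookkeeping already handled in \cite{MCF}. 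The step-size restriction $\tau\le C_0 h$ is needed, as in \cite{MCF}, only to control the $\mathbf W^{1,\infty}$-type remainder terms via inverse estimates and to keep the discrete surfaces admissible; it enters the argument in the same way and requires no change.

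For the consistency errors I would note that the exact solution $(X,v,\n,H,\u)$, inserted into \eqref{BDF 2} after Ritz projection, produces defects that split into (i) a spatial part identical in structure to that of \cite[Proposition~8.1]{MCF}, bounded by $Ch^k$ in the appropriate dual norms using the interpolation and Ritz-projection error estimates of \cite{KLLP2017,MCF} together with the assumed regularity $X(\cdot,t)\in H^{k+1}(\Ga^0)^3$ and $w,\mat w\in W^{k+1,\infty}$; and (ii) a temporal part arising from replacing $\mat$ by the BDF difference quotient, which is $O(\tau^q)$ by the order-$q$ accuracy of BDF$q$ (Taylor expansion of the smooth-in-time nodal vectors, as in \cite{AkrivisLubich_quasilinBDF}). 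The extra nonlinearity $f(H,\u,\nb u)=F(\u,\nb u)-(-H+\u)Hu$ in \eqref{eq:surface PDE - alternative weak form} is smooth and enters only through terms of the already-treated type, so it contributes nothing new to the defect bounds. Finally, the combination of the discrete Gronwall stability estimate with these $O(h^k+\tau^q)$ defect bounds, the treatment of the starting-value errors (assumed $O(h^k+\tau^q)$ in $H^1$), and the transfer from matrix--vector error norms to $H^1(\Ga(t_n))$-norms via the lift operator is carried out verbatim as in \cite[Section~9]{MCF} combined with \cite{MCF}'s full-discretization section, yielding the stated bounds. The main obstacle is thus the local modification of the stability estimate for the $\bfA(\bfx)\bfu$ term; everything else is a transcription of the arguments of \cite{MCF} with $\bfw$ in place of $\bfu$.
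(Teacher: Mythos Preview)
Your proposal is correct and follows essentially the same route as the paper, which simply notes that \eqref{BDF 2} has the identical matrix--vector form as \cite[equation~(5.1)]{MCF} with $\bfw=(\bfn;\bfH;\bfu)$ in place of $\bfu=(\bfn;\bfH)$, so that the fully discrete error analysis of \cite{MCF} applies verbatim once the local modification for the term $\bfA(\bfx)\bfu$ in $\bff_H$ (exactly the one carried out in the proof of Theorem~\ref{MainTHM}) is inserted. One small slip to watch: in \eqref{BDF 2} the nonlinearity is evaluated at the \emph{extrapolated} argument, so the extra term is $\bfA(\widetilde\bfx^n)\widetilde\bfu^n$ rather than $\bfA(\widetilde\bfx^n)\bfu^n$, and your substitution via the third equation of \eqref{BDF 2} yields the latter; the discrepancy is handled by the same perturbation arguments already present in \cite{MCF} for relating extrapolated and current quantities, but you should be aware of it.
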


Sufficient regularity assumptions are the following: uniformly in $t\in[0,T]$ and for $j=1,\dotsc,q+1$,
\begin{align*}
&\ X(\cdot,t)  \in  H^{k+1}(\Ga^0)^3, \ \pa_t^{j} X(\cdot,t) \in  H^{1}(\Ga^0)^3  , \\
&\ v(\cdot,t)  \in H^{k+1}(\Ga(t))^3,\ {\mat}^j v(\cdot,t) \in H^{2}(\Ga(t))^3  , \\
\text{for } \ w=(\nu,H,\u) , \quad &\ w(\cdot,t) , \mat w(\cdot,t) \in  W^{k+1,\infty}(\Ga(t))^5, \
{\mat}^j w(\cdot,t) \in  H^{2}(\Ga(t))^5  .
\end{align*}
For the starting values, sufficient approximation conditions are as follows: for $i=0,\dotsc,q-1$,
\begin{align*}
	\|(x_h^i)^L - \mathrm{id}_{\Gamma(t_i)}\|_{H^1(\Ga(t_i))^3} &\leq C(h^k+\tau^q), \qquad \\
	\text{for } \ w=(\nu,H,\u) , \qquad \|(w_h^i)^L - w(\cdot,t_i)\|_{H^1(\Ga(t_i))^5} &\leq C(h^k+\tau^q), \qquad
\end{align*}
and in addition, for $i=1,\dotsc,q-1$,
\begin{equation*}
	\tau^{1/2} \Big\|\frac{1}{\tau}\big(X_h^i - X_h^{i-1} \big)^l - \frac{1}{\tau}\big(X(\cdot,t_i) -  X(\cdot,t_{i-1}) \big) \Big\|_{H^1(\Ga_0)^3} \leq C(h^k+\tau^q) .
\end{equation*}

\bcl
Since \eqref{BDF 2} is the same as the matrix--vector form of mean curvature flow in \cite[equation~(5.1)]{MCF} (recalling that here $\bfw=(\bfn;\bfH;\bfu)$ takes the role of $\bfu=(\bfn;\bfH)$ of \cite{MCF}) and the only problematic additional term in \eqref{BDF 2} is the term 
$\bfA(\widetilde\bfx^n)\widetilde\bfu^n$ that appears in $\bff(\widetilde \bfx^n,\widetilde \bfw^n)$,
the proof of Theorem~\ref{MainTHM-full}  directly follows from the error analysis presented in \cite{MCF} together with the modification concerning $\bfA(\bfx)\bfu$ given in the proof of Theorem~\ref{MainTHM}.
 
%

\begin{remark}
\label{remark:fully discrete convergence}
	\bcl For the second algorithm \eqref{BDF 1}, we expect that \ecl a fully discrete error estimate can be obtained by combining the stability results for the coupled mean curvature flow, \cite[Proposition~10.1]{MCF}, with the extension of the stability analysis for the surface PDE \cite[Proposition~6.1]{KL2018} (via energy estimates  obtained by testing with $\dot{\bfe}^n$).
	We note here that this extension, in particular the analogous steps to part (iv) in \cite[Proposition~10.1]{MCF}, is lengthy and possibly nontrivial. \bcl Numerical experiments presented in Section~\ref{section:numerics} illustrate that optimal-order error estimates are also observed for the scheme \eqref{BDF 1}.\ecl
\end{remark}

\section{Numerical experiments}
\label{section:numerics}

We present numerical experiments for the forced mean curvature flow, using both \eqref{BDF 2} and \eqref{BDF 1}. For our numerical experiments we consider the problem coupling forced mean curvature flow (with a new parameter $\eps>0$) of the surface $\Ga(X(\cdot,t))$, together with evolution equations for its normal vector $\n$ and mean curvature $H$, where the forcing is given through the solution $\u$ of a reaction--diffusion problem on the surface:
\begin{equation}
\label{eq:numerics problem}
	\begin{aligned}
		\mat \u = &\ - \u (\nb_{\Ga[X]} \cdot v) + \varDelta_{\Ga[X]} \u + f(\u,\nb_{\Ga[X]}\u) + \varrho_1 , \\
		v =&\ - \eps H \nu + g\bigl(u) \nu + \varrho_2 , \\
		\mat \nu =&\ \eps \varDelta_{\Ga[X]} \nu + \eps |A|^2 \nu - \nb_{\Ga[X]} (g(\u)) + \varrho_3 , \\
		\mat H =&\ \eps \varDelta_{\Ga[X]} H + \eps |A|^2 H - \varDelta_{\Ga[X]} (g(\u)) - |A|^2 g(\u) + \varrho_4 , \\
		\partial_t X =&\ v ,
	\end{aligned}
\end{equation}
where the inhomogeneities $\varrho_i$ are scalar or vector valued functions on $\R^3 \times [0,T]$, to be specified later on.

We used this problem to perform:
\begin{itemize}
	\item[-] A convergence order experiment for the algorithm \eqref{BDF 2}, in order to illustrate our theoretical results of Theorem~\ref{MainTHM} and \ref{MainTHM-full}.
	\item[-] A convergence order experiment for algorithm \eqref{BDF 1}, illustrating Remark~\ref{remark:semi-discrete convergence} and \ref{remark:fully discrete convergence}.
	\item[-] An experiment, using algorithm \eqref{BDF 2}, for a tumour growth model from \cite[Section~5]{BarreiraElliottMadzvamuse2011}, where one component of a reaction--diffusion surface PDE system forces the mean curvature flow motion of the surface. This experiment allows a direct comparison on the same problem with other methods published in the literature.
\end{itemize}

\bbk All our numerical experiments use quadratic evolving surface finite elements, and linearly implicit BDF methods. The numerical computations were carried out in Matlab. The initial meshes for all surfaces were generated using DistMesh \cite{distmesh}, without taking advantage of the symmetries of the surfaces. \ebk 

\subsection{Convergence experiments}

In order to illustrate the convergence results of Theorem~\ref{MainTHM} and \ref{MainTHM-full}, we have computed the errors between the numerical and exact solutions of the system \eqref{eq:numerics problem}, where the forcing is set to be $g(\u)=\u$, and $\epsilon=1$. The reaction term in the PDE is $F(\u,\nb_{\Ga[X]}\u)=\u^2$. The inhomogeneities $\varrho_i$ are chosen such that the exact solution is $X(q,t)=R(t) q$, with $q$ on the initial surface  $\Gamma_0$, the sphere with radius $R_0$, and $\u(x,t)=e^{-t}x_1x_2$, for all $x \in \Ga[X]$ and $0\leq t \leq T$.
The function $R$ satisfies the logistic differential equation:
\begin{align*}
\frac{\d R\t}{\d t}  =&\ \bigg(1 - \frac{R\t}{R_1}\bigg) R\t, \qquad t \in [0,T], \\
R(0) =&\ R_0 ,
\end{align*}
with $R_1 \geq R_0$, i.e.~the exact evolving surface $\Ga[X(\cdot,t)]$ is a sphere with radius $R\t=R_0R_1 \big( R_0(1-e^{-t}) + R_1 e^{-t}\big)\inv$.

\begin{figure}[htbp]
	\centering
	\includegraphics[width=\textwidth]{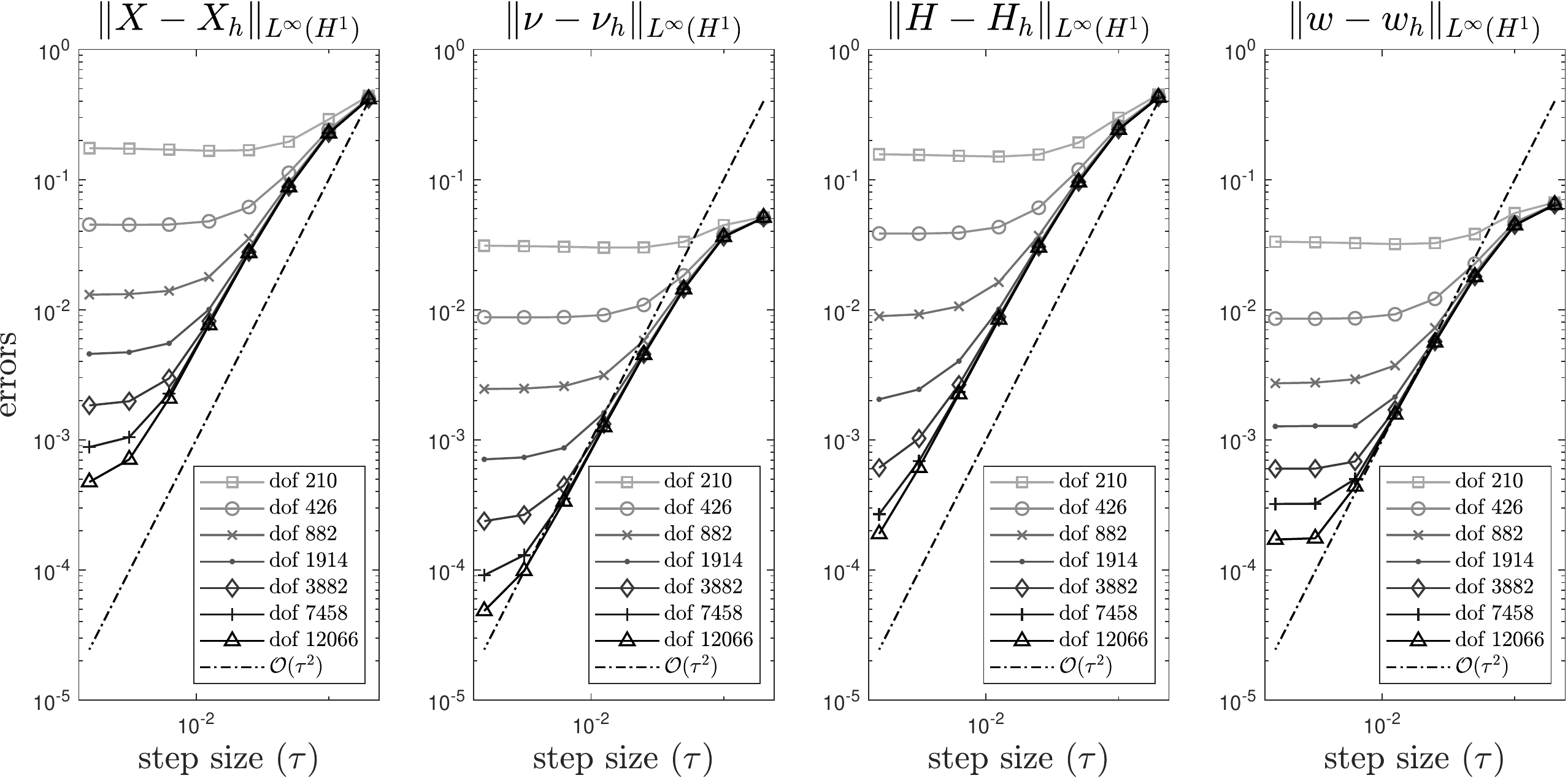}
	\caption{Temporal convergence of the algorithm \eqref{BDF 2} for forced MCF with $g(\u)=\u$, using BDF2 / quadratic ESFEM.}
	\label{fig:conv_time_A}
\end{figure}
\begin{figure}[htbp]
	\centering
	\includegraphics[width=\textwidth]{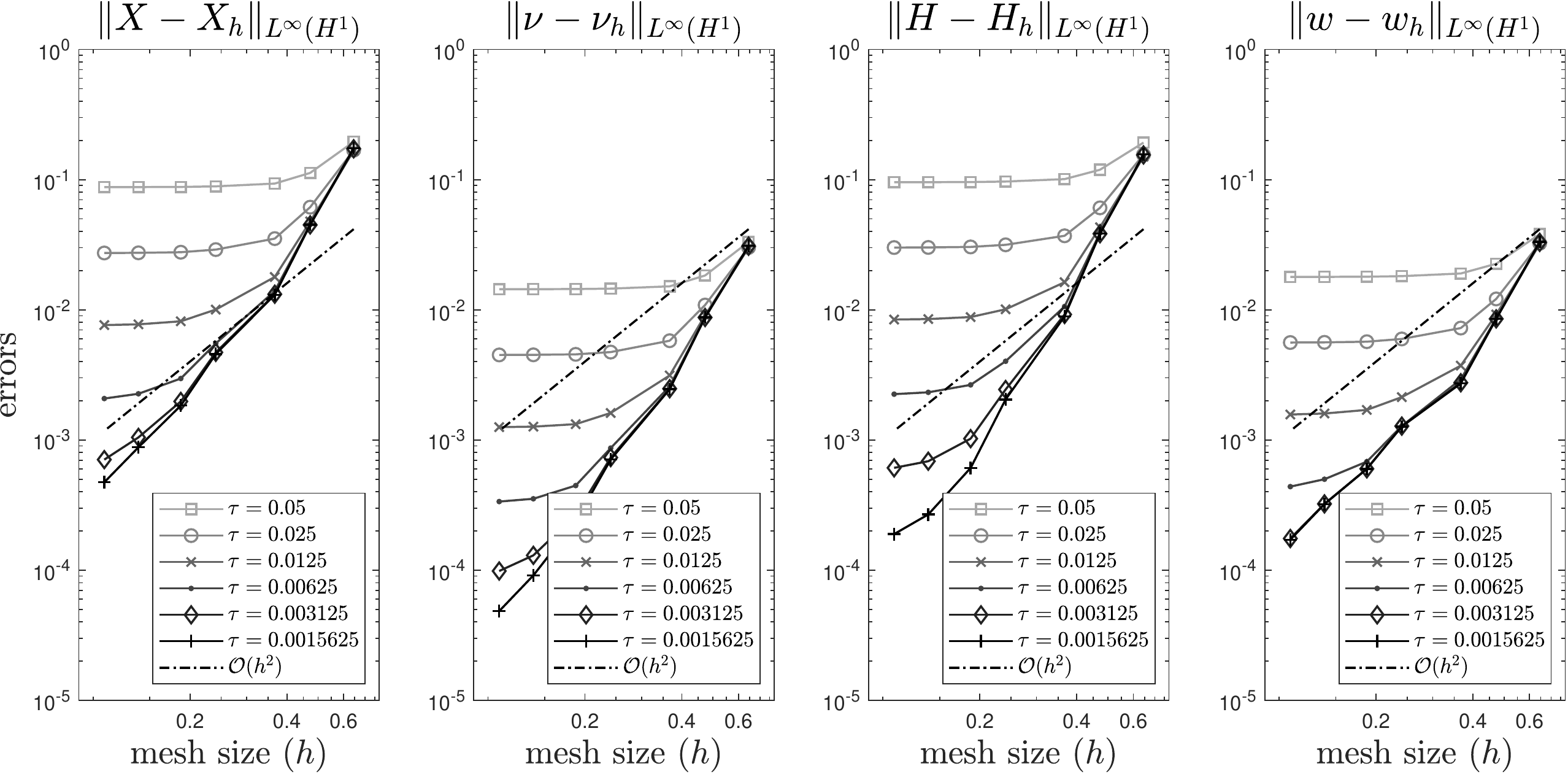}
	\caption{Spatial convergence of the algorithm \eqref{BDF 2} for forced MCF with $g(\u)=\u$, using BDF2 / quadratic ESFEM.}
	\label{fig:conv_space_A}
\end{figure}

Using the algorithm in \eqref{BDF 2} with $2$-step BDF method and quadratic evolving surface FEM, we computed approximations to forced mean curvature flow, using $R_0=1$ and $R_1=2$, until time $T=1$. For our computations we used a sequence of time step sizes $\tau_k=\tau_{k-1}/2$ with $\tau_0 = 0.2$, and a sequence of initial meshes of mesh widths $h_k \approx 2^{-1/2} h_{k-1}$ with $h_0 \approx 0.5$.
The numerical experiments suggest that the step size restriction \eqref{stepsize-restriction} is not required in practice.

In Figure~\ref{fig:conv_time_A} and \ref{fig:conv_space_A} we report the errors between the exact and both numerical solutions for all four variables, i.e.~the surface error, the errors in the dynamic variables $\nu$ and $H$, and the error in the PDE variable $\u$.
The logarithmic plots show the $L^\infty(H^1)$ norm errors against the time step size $\tau$ in Figure~\ref{fig:conv_time_A}, and against the mesh width $h$ in Figure~\ref{fig:conv_space_A}.
The lines marked with different symbols correspond to different mesh refinements and to different time step sizes in Figure~\ref{fig:conv_time_A} and \ref{fig:conv_space_A}, respectively.

In Figure~\ref{fig:conv_time_A} we can observe two regions: a region where the temporal discretization error dominates, matching to the $O(\tau^2)$  order of convergence of our theoretical results, and a region, with small time step sizes, where the spatial discretization error dominates (the error curves flatten out). For Figure~\ref{fig:conv_space_A}, the same description applies, but with reversed roles.

Both the temporal and spatial convergence, as shown by Figures~\ref{fig:conv_time_A} and \ref{fig:conv_space_A}, respectively, are in agreement with the theoretical convergence results of Theorem~\ref{MainTHM} and \ref{MainTHM-full} (note the reference lines).

\medskip
We have performed the same convergence experiments using algorithm \eqref{BDF 1}, which, in view of Remarks~\ref{remark:semi-discrete convergence} and \ref{remark:fully discrete convergence}, and the stability and convergence results of previous works \cite{LubichMansourVenkataraman_bdsurf,KLLP2017,KL2018,MCF}, should also have the same convergence properties as the algorithm \eqref{BDF 1}. As Figures \eqref{fig:conv_time_B} and \eqref{fig:conv_space_B} (created analogously as Figure~\ref{fig:conv_time_A} and \ref{fig:conv_space_A}) illustrate, this expectation appears to be fulfilled.

\begin{figure}[htbp]
	\centering
	\includegraphics[width=\textwidth]{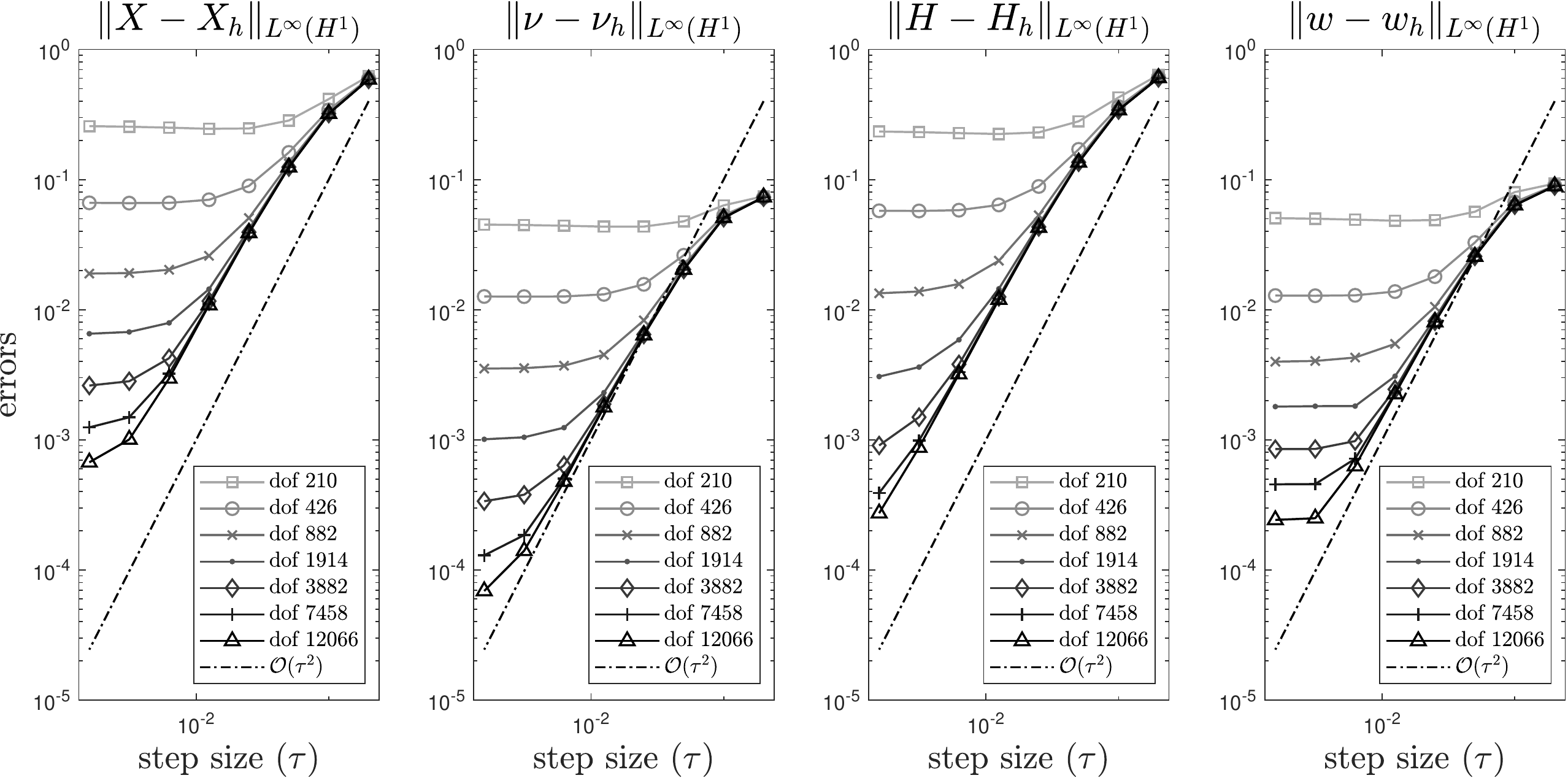}
	\caption{Temporal convergence of the algorithm \eqref{BDF 1} for forced MCF with $g(\u)=\u$, using BDF2 / quadratic ESFEM.}
	\label{fig:conv_time_B}
\end{figure}
\begin{figure}[htbp]
	\centering
	\includegraphics[width=\textwidth]{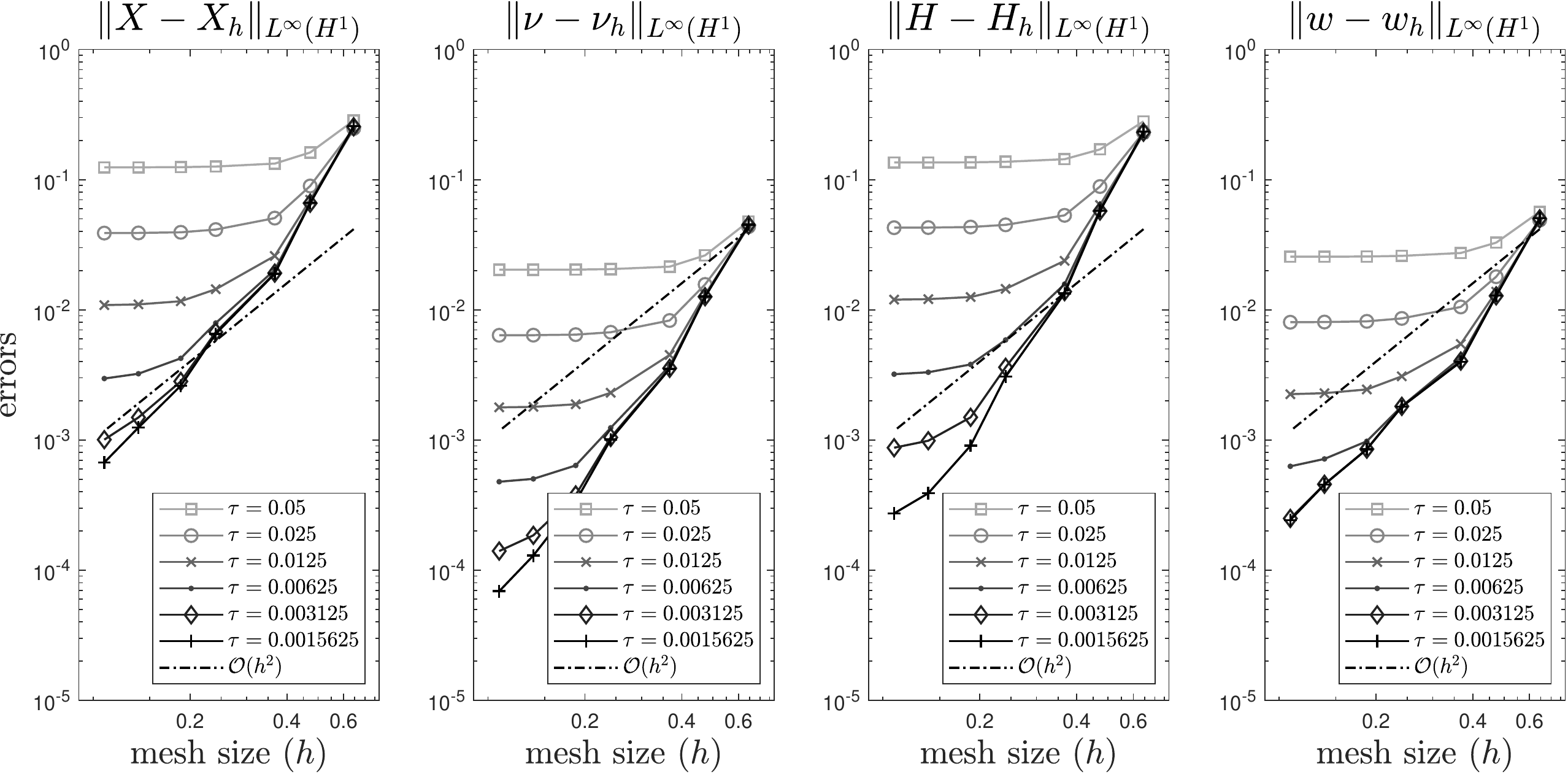}
	\caption{Spatial convergence of the algorithm \eqref{BDF 1} for forced MCF with $g(\u)=\u$, using BDF2 / quadratic ESFEM.}
	\label{fig:conv_space_B}
\end{figure}

\bbk 
We have obtained similar convergence plots for the non-linear forcing term $g(\u)=\tfrac12 \u^2$ for both algorithms.
\ebk

\subsection{Tumour growth}

We performed numerical experiments, using \eqref{BDF 2}, on a well-known model for forced mean curvature flow from \cite[Section~5]{BarreiraElliottMadzvamuse2011}: The problem \eqref{eq:numerics problem}, with vector valued unknown $\u=(\u_1,\u_2)$ and with a small parameter $\eps = 0.01$, models solid tumour growth, for further details we refer to \cite{CrampinGaffneyMaini1999,CrampinGaffneyMaini2002,CGG} and \cite{BarreiraElliottMadzvamuse2011}. Our results can be compared to those in these references, in particularly with those in \cite{BarreiraElliottMadzvamuse2011}.

The surface PDE system for $\u=(\u_1,\u_2)$ describes the activator--depleted kinetics, and has diffusivity constants $1$ and $d=10$ for $\u_1$ and $\u_2$, respectively. The reaction term is given by, with $\gamma>0$,
\begin{equation*}
F(\u) = F(\u_1,\u_2) = \left(\begin{array}{c}
\gamma \big( a - \u_1 + \u_1^2\u_2 \big) \\ 
\gamma \big( b - \u_1^2\u_2 \big) 
\end{array}\right) ,
\end{equation*}
while in the velocity law the non-linearity is given by
\begin{equation*}
g(\u) = g(\u_1,\u_2) = \delta \,\u_1 .
\end{equation*}

The parameters are chosen exactly as in \cite[Table~5]{BarreiraElliottMadzvamuse2011}: $d=10$, $a=0.1$, $b=0.9$, $\delta=0.1$, and $\epsilon=0.01$. The parameter $\gamma$ will be varied for different experiments.

The initial data for all of the presented experiments are obtained (exactly as in \cite[Section~4.1.1 and Figure~8]{BarreiraElliottMadzvamuse2011}) by integrating the reaction--diffusion system on the fixed unit sphere over the time interval $[0,5]$, with small random perturbations of the steady state $\u_1 = a+b$ and $\u_2=b/(a+b)^2$ as initial data. Further initial values (for $i=1,\dotsc,q-1$) for high-order BDF methods are computed using a cascade of steps performed by the corresponding lower order methods.

To mitigate the stiffness of the non-linear term, the linear part of $F(\u)$ is handled fully implicitly, while the non-linear parts of $F$, and the velocity law as well, are treated linearly implicitly using the extrapolation \eqref{eq:extrapolation def}.

In Figure~\ref{fig:sol_gamma30} and \ref{fig:sol_gamma300} we report on the evolution of the surface (and the approximated mean curvature and normal vector) and the component $\u_1$ for parameters $\gamma=30$ and $\gamma=300$, respectively, at different times over the time interval $[5,8]$. \bbk In these plots the linear interpolation of the computed quadratic surface is plotted (since Matlab can only visualise polygonal objects). \ebk 
Figure~\ref{fig:sol_gamma30} and \ref{fig:sol_gamma300} we present the surface evolution and the component $\u_1$ of the surface PDE system (left-hand side columns) and the computed mean curvature $H_h$ and normal vector $\nu_h$ (right-hand side columns) at times $t=5,6,7,8$ (the rows from top to bottom), on a mesh with $3882$ nodes and time step size $\tau=0.0015625$. In particular the top rows show the initial data where the surface evolution is started.
The obtained results for the surface evolution and the reaction--diffusion PDE system (left columns) match nicely (note the random effects in generating initial data) to previously reported results.

\bbk In spite of the smoothing effect of the mean curvature flow, for some more complicated examples it would be beneficial to use an algorithm which allows the tangential motion of the surface nodes, for example based on the DeTurck trick \cite{ElliottFritz_DT}, or on the velocity law $v \cdot \nu = V$, e.g., \cite{BGN2007,BGN2008}, or on ALE techniques \cite{ElliottVenkataraman_ALE,KovacsPower_ALEdiscrete,ALEmap}. However, in our experiments -- both here and in \cite{MCF} -- this was not found necessary. \ebk 

\begin{figure}[htbp]
	\centering
	\includegraphics[trim={80 120 90 90},clip,height=0.24\textheight]{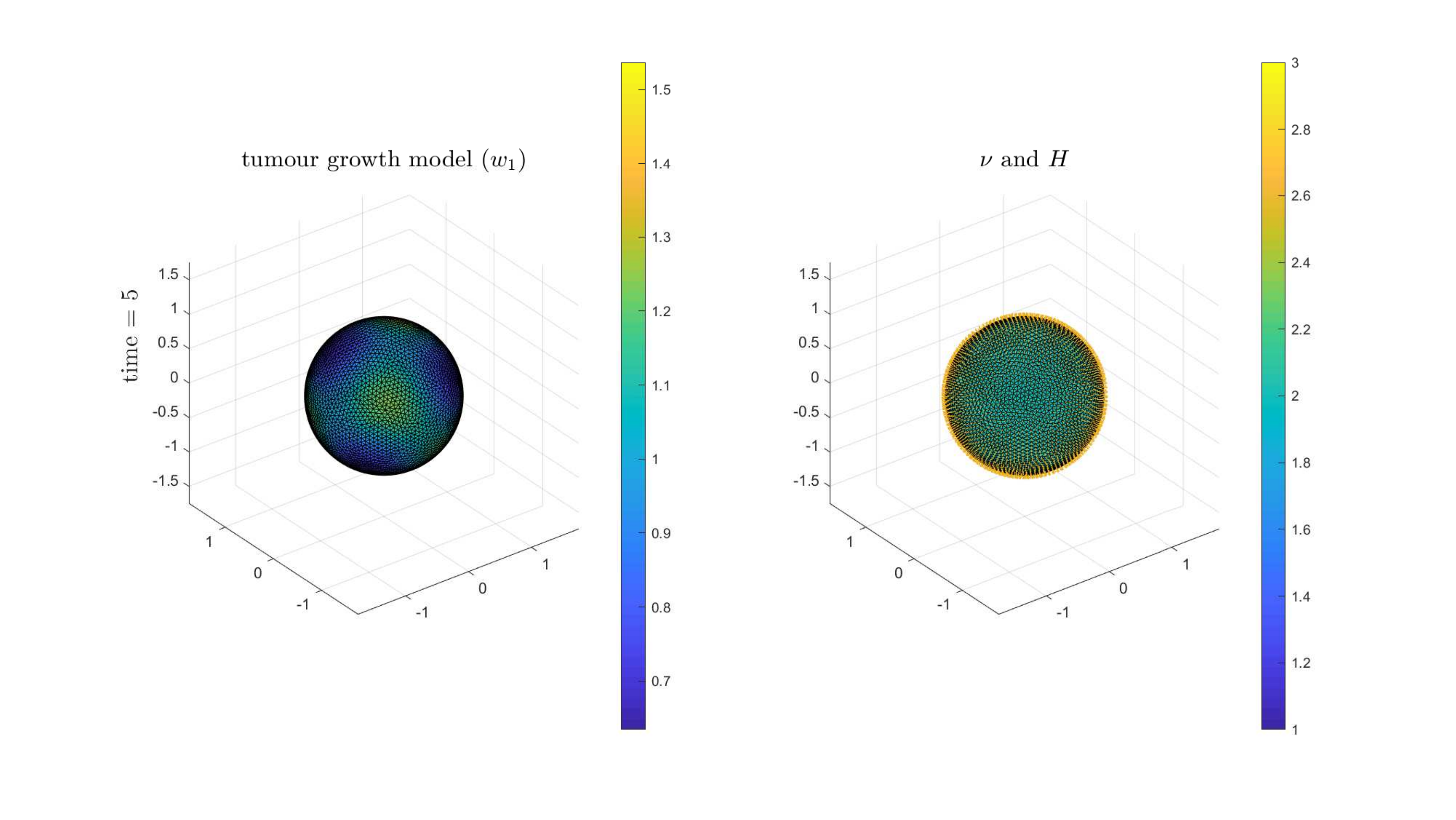}
	\includegraphics[trim={80 120 90 90},clip,height=0.24\textheight]{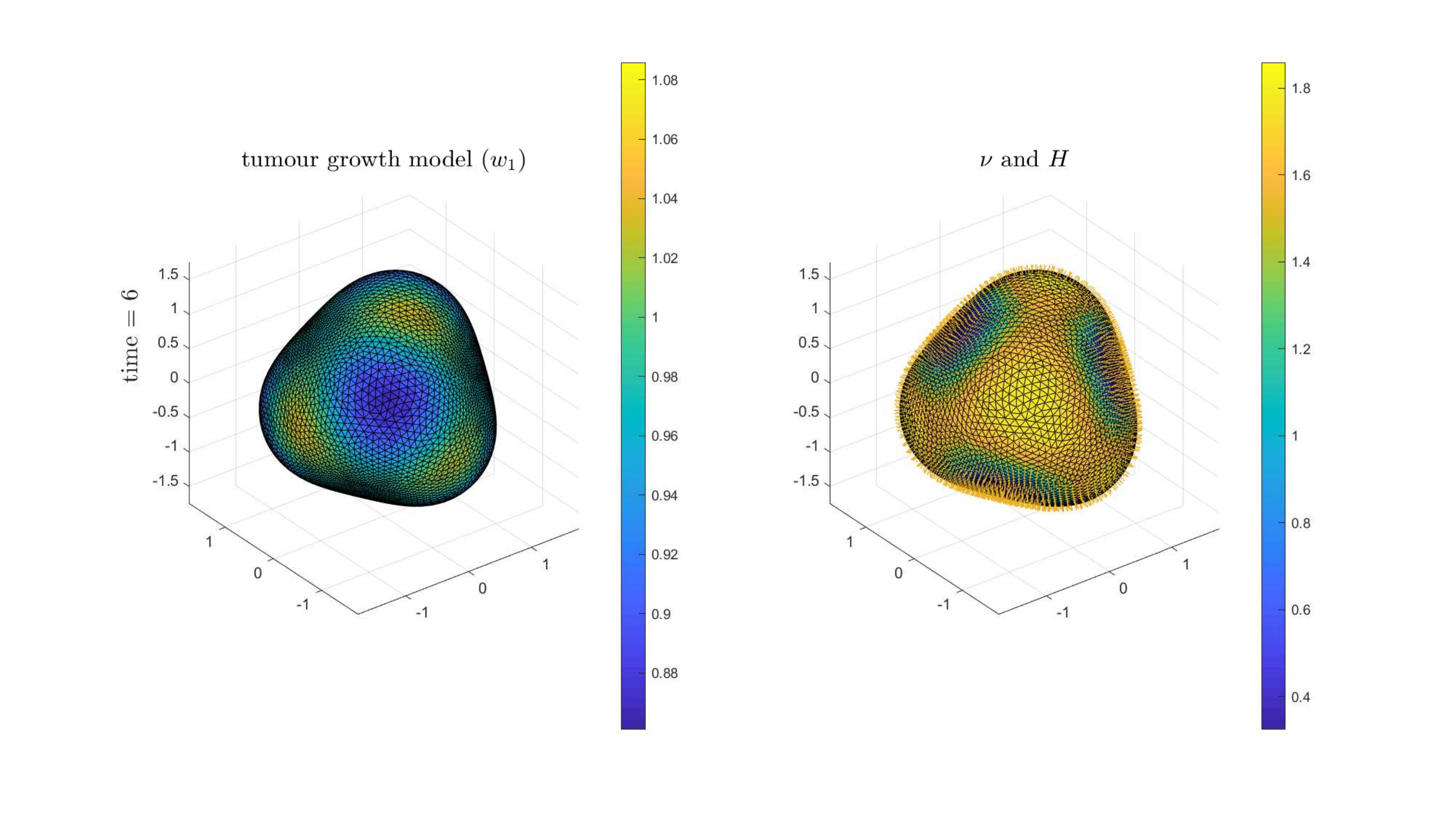}
	\includegraphics[trim={80 120 90 90},clip,height=0.24\textheight]{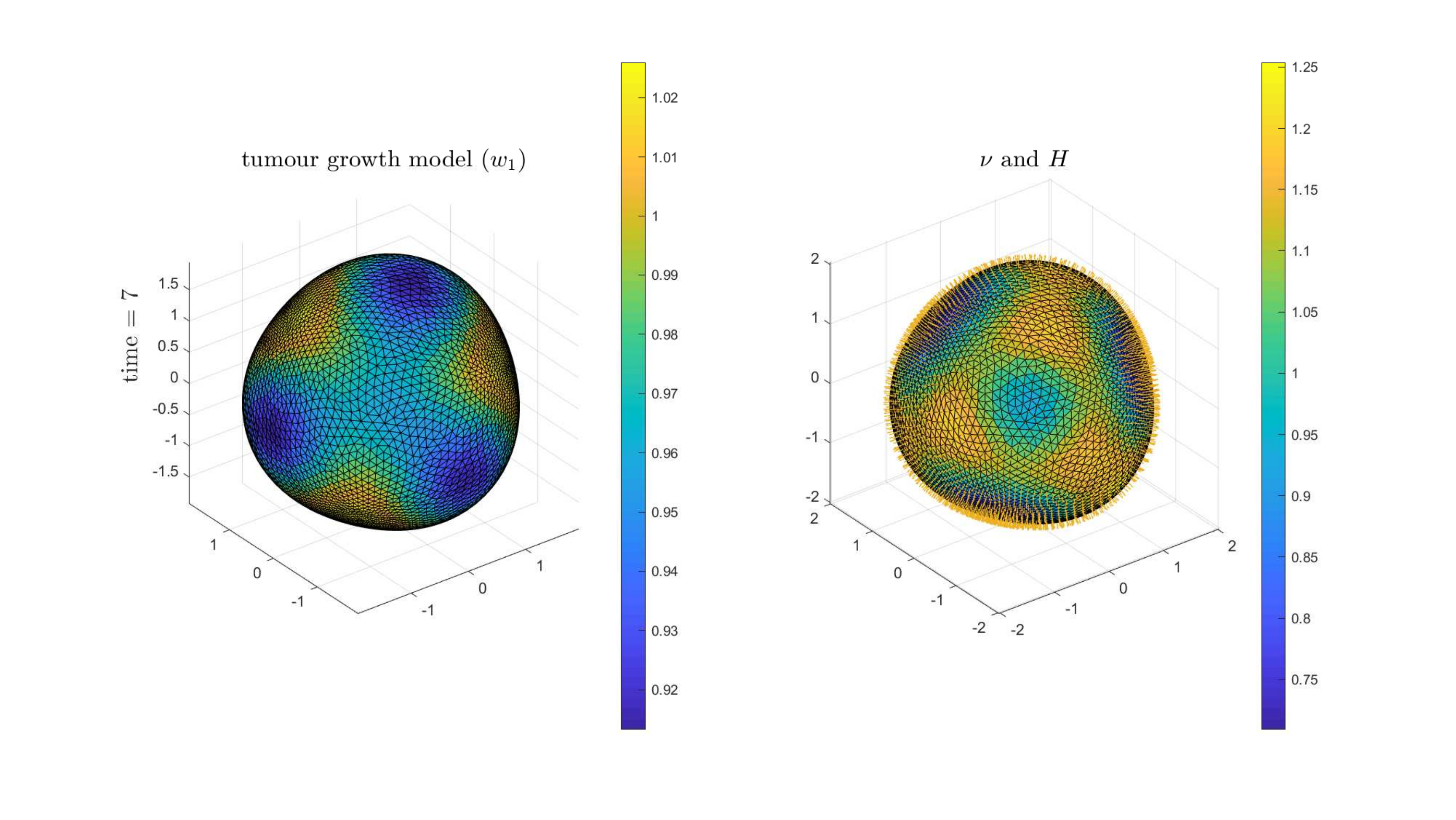}
	\includegraphics[trim={80 120 90 90},clip,height=0.24\textheight]{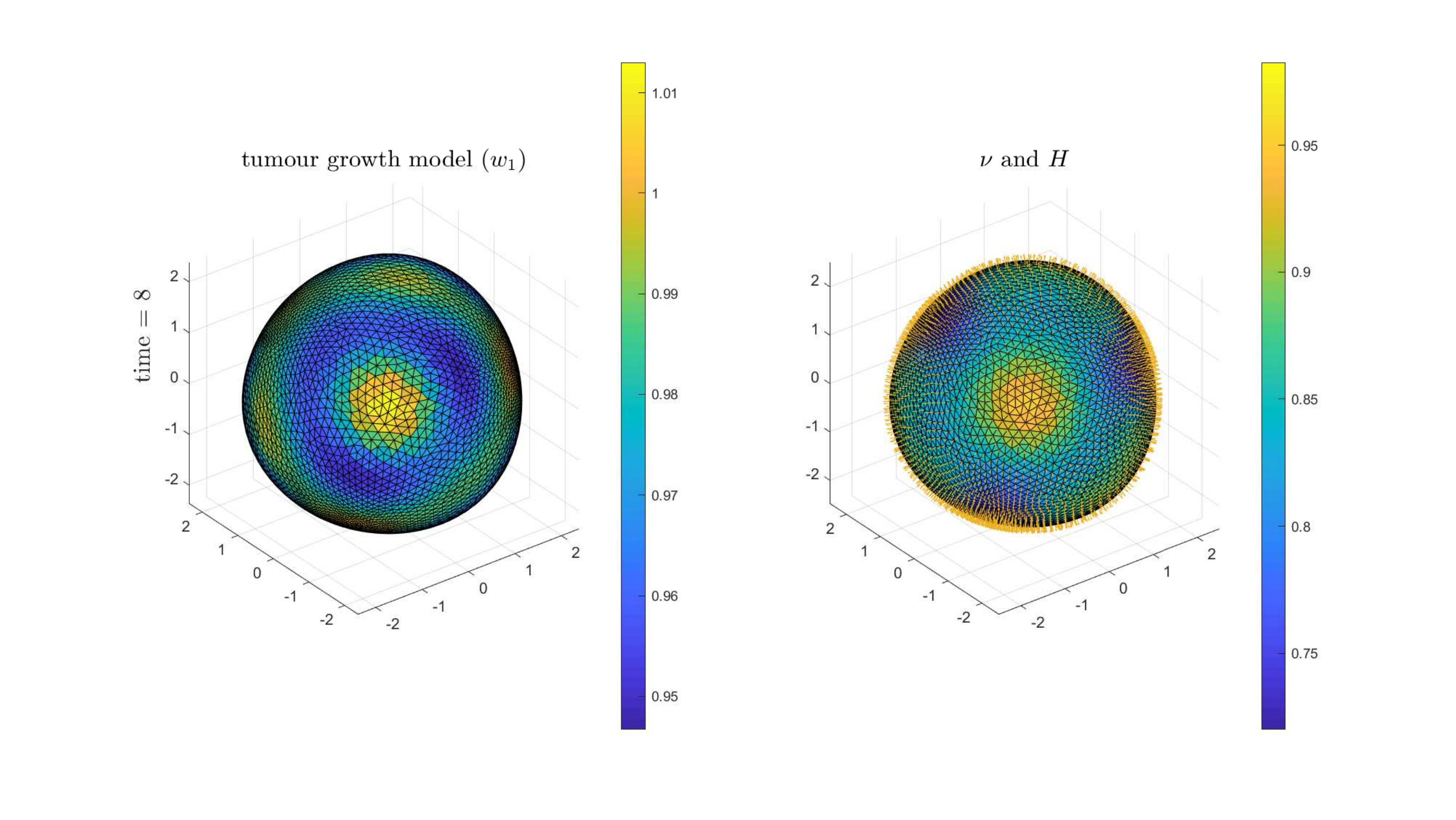}
	\caption{Evolution of the solution ($\u_1$), normal vector and mean curvature for tumour growth model with $\gamma=30$ at time $t=5,6,7,8$; dof $3882$.}
	\label{fig:sol_gamma30}
\end{figure}

\begin{figure}[htbp]
	\centering
	\includegraphics[trim={80 120 90 90},clip,height=0.24\textheight]{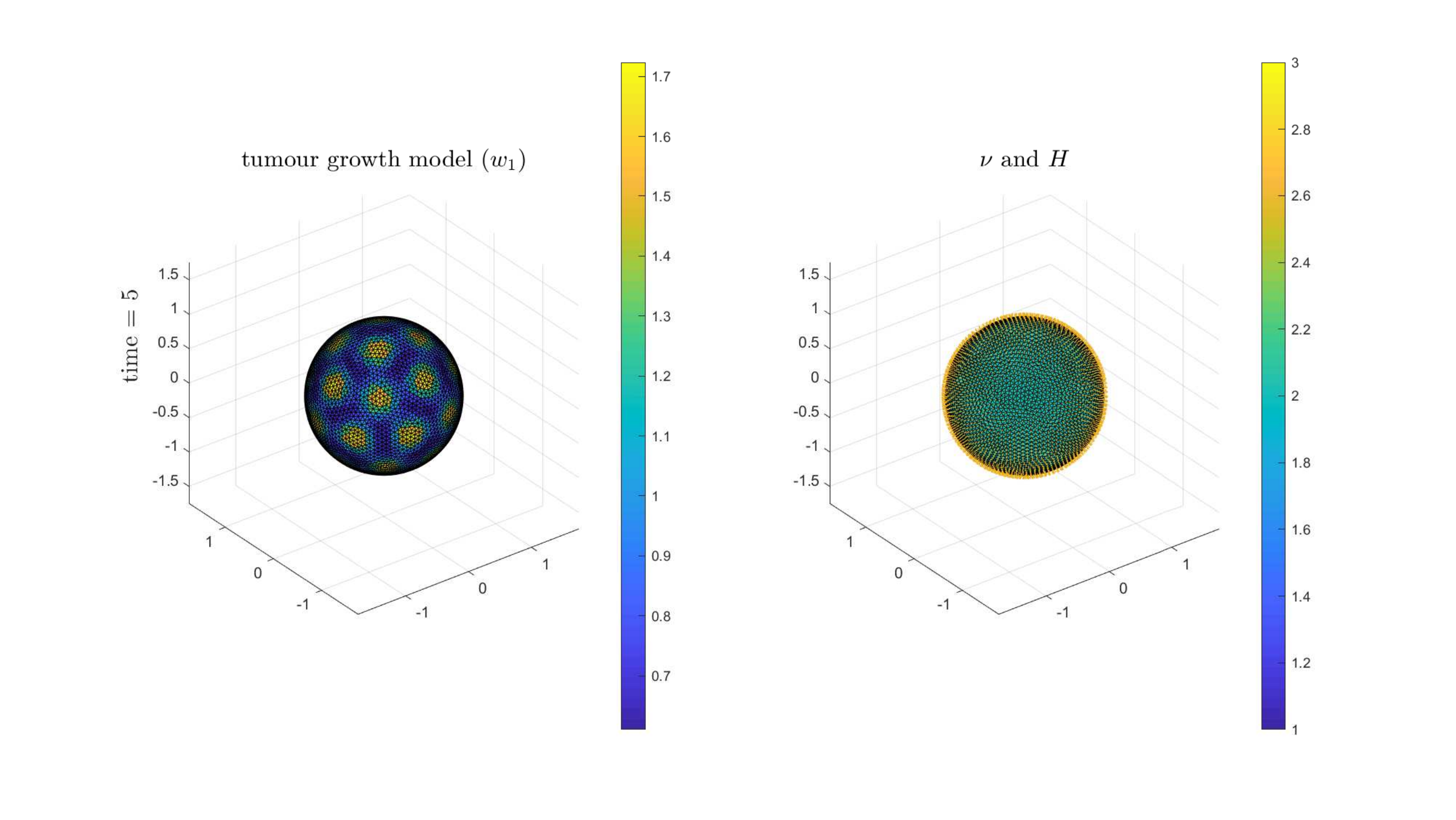}
	\includegraphics[trim={80 120 90 90},clip,height=0.24\textheight]{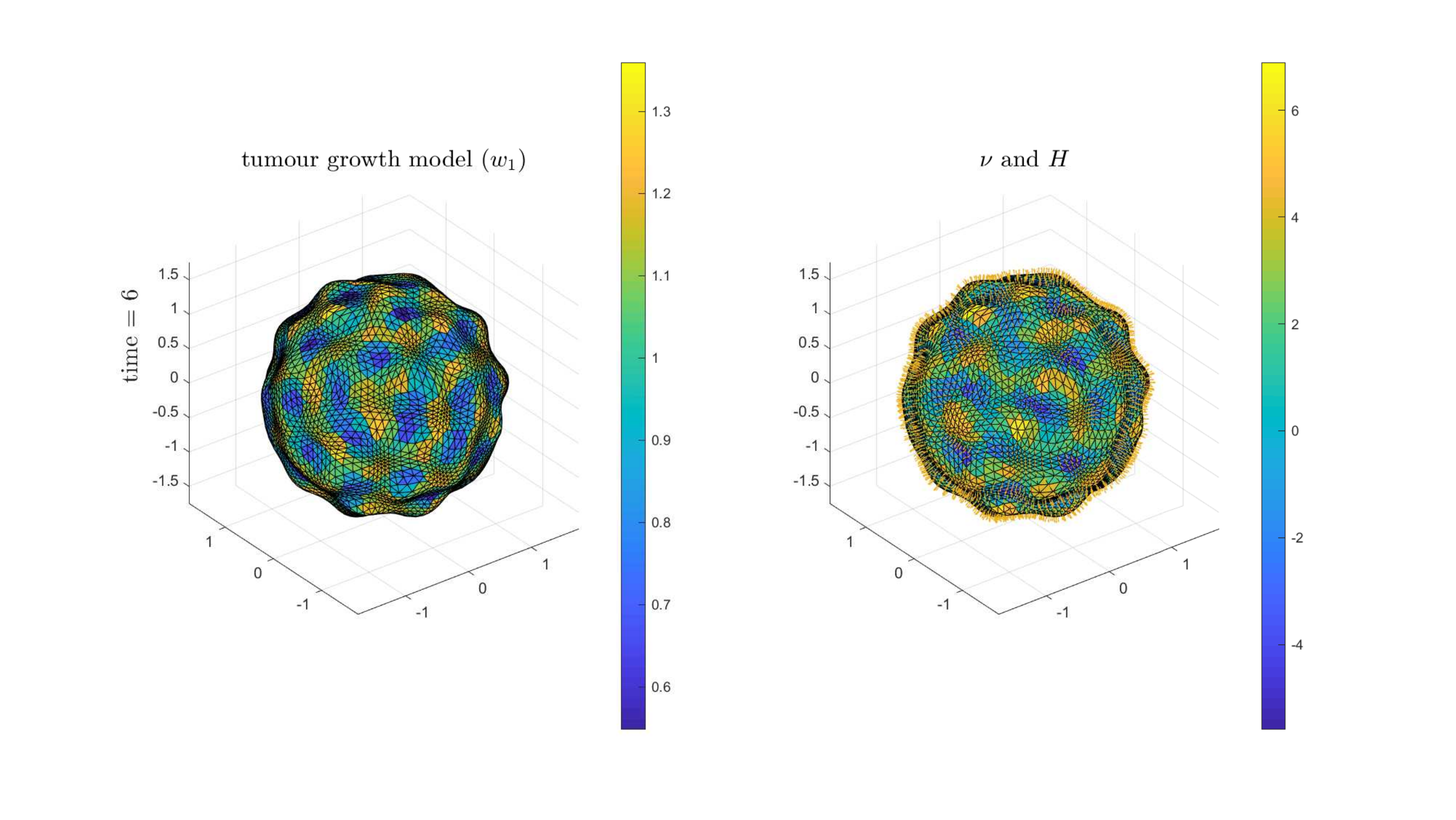}
	\includegraphics[trim={80 120 90 90},clip,height=0.24\textheight]{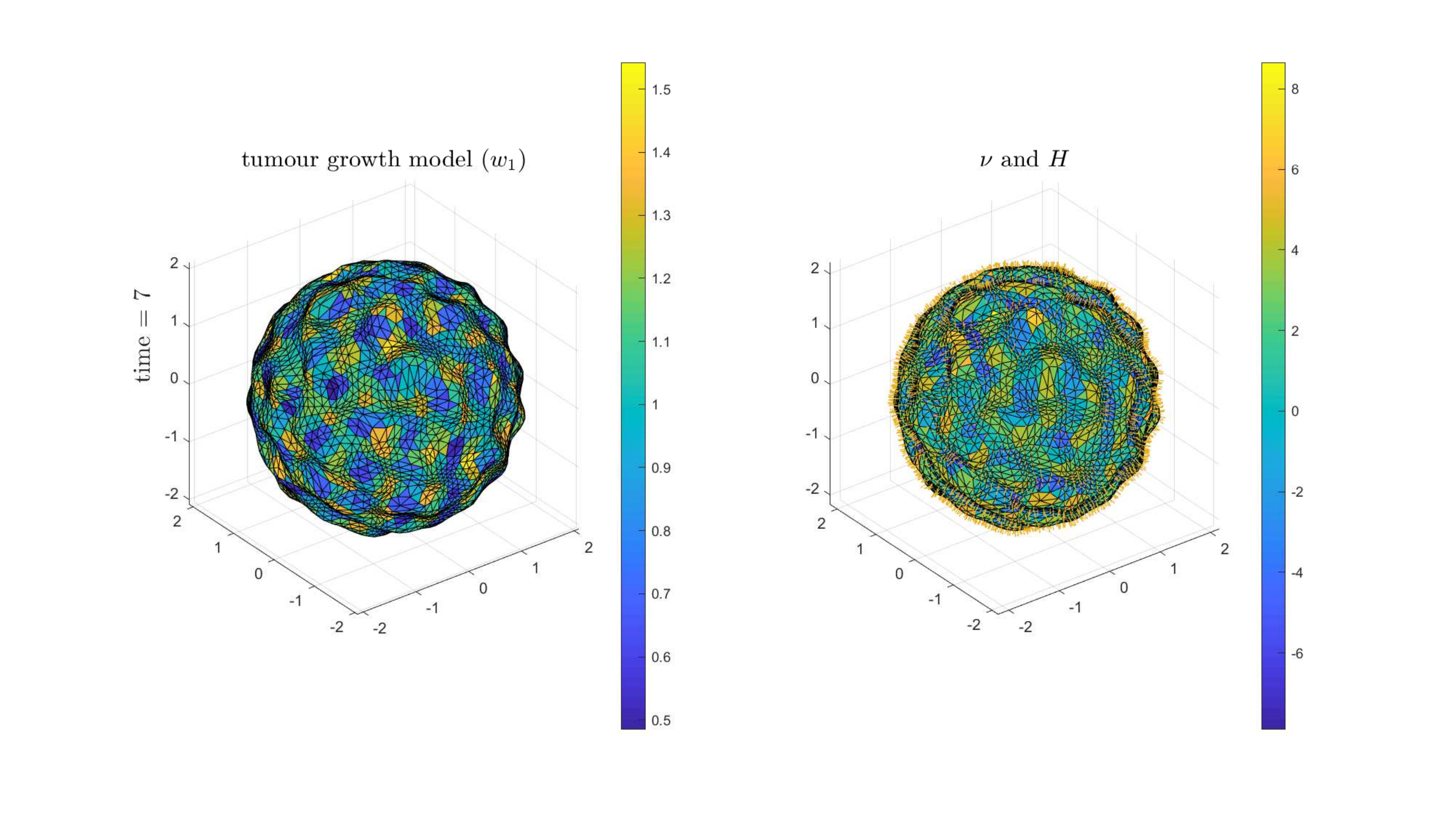}
	\includegraphics[trim={80 120 90 90},clip,height=0.24\textheight]{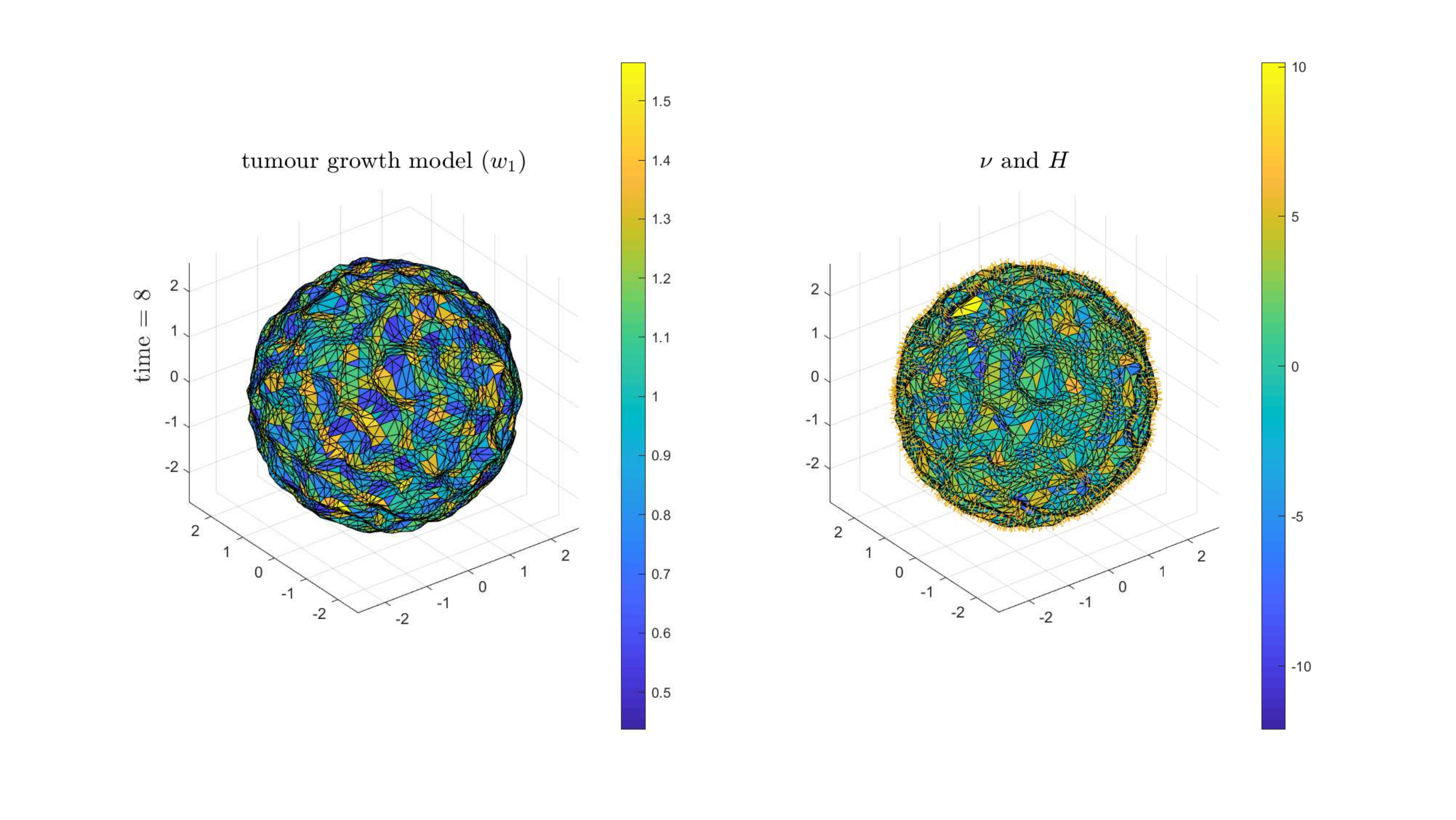}
	\caption{Evolution of the solution ($\u_1$), normal vector and mean curvature for tumour growth model with $\gamma=300$ at time $t=5,6,7,8$; dof $3882$.}
	\label{fig:sol_gamma300}
\end{figure}

\section*{Acknowledgement}
The work of Bal\'azs Kov\'acs and Christian Lubich is supported by Deutsche Forschungsgemeinschaft, SFB 1173. 
The work of Buyang Li is partially supported by an internal grant (Project ZZKQ) of The Hong Kong Polytechnic University.

\bibliographystyle{abbrv}

\end{document}